\newtheorem{theorem}{Theorem}[section]
\newtheorem{proposition}{Proposition}[section]
\newtheorem{lemma}{Lemma}[section]
\newtheorem{corollary}{Corollary}[section]
\newtheorem{observation}{Observation}[section]
\newtheorem{example}{Example}[section]
\def\R{\mathbb{R}}
\def\bp{\begin{proof}}
\def\ep{\end{proof}}
\def\R{{\cal R}}
\def\R{\mathbb{R}}
\def\B{\mathcal{B}}
\def\R{\mathbb{R}}
\def\H{\mathbb{H}}         
\def\S{\mathbb{S}}         
\def\B{\mathbb{B}}
\begin{document}

\title[Gap results for free boundary CMC surfaces]
{Gap results for free boundary CMC surfaces in conformally Euclidean three-balls}

\author{Maria Andrade}
\address{Universidade Federal de Sergipe, Departamento de Matem\'{a}tica, 49100-000, S\~ao Crist\'ov\~ao, SE, Brasil and Universidade Federal de Minas Gerais (UFMG), Departamento de Matem\'{a}tica, Caixa Postal 702, 30123-970, Belo Horizonte, MG, Brasil.}
\email{maria@mat.ufs.br}
\author{Ezequiel Barbosa}
\address{Universidade Federal de Minas Gerais (UFMG), Departamento de Matem\'{a}tica, Caixa Postal 702, 30123-970, Belo Horizonte, MG, Brazil}
\email{ezequiel@mat.ufmg.br}
\author{Edno Pereira}
\address{Universidade Federal de Minas Gerais (UFMG), Departamento de Matem\'{a}tica, Caixa Postal 702, 30123-970, Belo Horizonte, MG, Brazil}
\email{ednoalan@ufmg.br}

\thanks{The authors were partially supported by  CNPq, CAPES and FAPEMIG/Brazil agencies grants.}

\date{}

\begin{abstract} In this work, we consider
 $M=(\B^3_r,\bar{g})$ as the Euclidean three-ball with radius $r$ equipped with the metric $\bar{g}=e^{2h}\left\langle , \right\rangle$ conformal to the Euclidean metric. We show that if a free boundary CMC surface $\Sigma$ in $M$ satisfies a pinching condition on the length of the traceless second fundamental tensor  which involves the support function of $\Sigma$, the positional conformal vector field $\vec{x}$ and its potential function $\sigma,$ then either $\Sigma$ is a disk or $\Sigma$ is an annulus rotationally symmetric. In a particular case, we construct an example of minimal surface with strictly convex boundary in $M$, when $M$ is the Gaussian space, that illustrate our results. These results  extend to the CMC case and to many others different conformally Euclidean spaces the main result obtained by Haizhong Li and Changwei Xiong in \cite{HaXa}. 
\end{abstract}

\maketitle

{\bf Keywords:} Gap theorem, Constant mean curvature surfaces, Free boundary, Conformally Euclidean spaces.

{\bf MSC:} 53C20, 53A10, 49Q10.
\vspace{0.5cm}
\section{Introduction}\label{intro}

Let $M$ be a three-dimensional Riemannian manifold with smooth boundary $\partial M$. Let $x:\Sigma \to M$ be an isometric immersion, where $\Sigma$ is a smooth compact surface with $\partial \Sigma \subseteq \partial M.$ As well known, $\Sigma$ is a free boundary CMC surface, if the mean curvature is constant and $T\Sigma$ is orthogonal to $T\partial M$ at every point of $\partial \Sigma.$ The first variation formula shows that free boundary CMC surfaces are critical points of the area functional for volume preserving variations of $\Sigma,$ whose $\partial \Sigma$ is free to move in $\partial \Sigma.$ In the setting where a minimal surface lies in a three-dimensional Euclidean unit ball $\mathbb {B}^3$ with free boundary, the flat equatorial disk and the critical catenoid are the most well known examples. Here the critical catenoid is a piece of a catenoid in $\mathbb{R}^3$ which intersects $\partial \mathbb{B}^3$ orthogonally. In the setting where a CMC surface, with non-zero mean curvature, lies in a three-dimensional Euclidean unit ball $\mathbb {B}^3$ with free boundary, the spherical caps and the pieces of Delaunays surfaces, inside the ball which intersects $\partial \mathbb{B}^3$ orthogonally, are the most well known examples. Many abstract examples of free boundary minimal surfaces in the unit three-dimensional ball had been recently constructed by using the desingularization method or the gluing method (see, for instance, \cite{FS16, FPZ, K, KW}), and it is expected these methods can also be used to build  other examples of free boundary CMC surfaces with high genus and many boundary components. 

In this work, we are interested in the classification problem considering a pinching condition involving the length of the second fundamental form and the support function of the free boundary CMC surfaces. In 2016, Ambrozio and Nunes, see \cite{ambrozio2016gap}, provided the first characterization result on the equatorial disk and the critical catenoid in three-dimensional unit Euclidean ball using a condition involving the length of the second fundamental form and the support function. More precisely, they proved the following:

\begin{theorem}[\cite{ambrozio2016gap}] 
 Let $\Sigma$ be a compact free boundary minimal surface in $\mathbb{B}^3.$ Assume that for all points $x\in\Sigma,$
 $$|A|^2(x)\langle x,N(x)\rangle^2\leq 2,$$
 where $N(x)$ denotes a unit normal vector at the point $x\in \Sigma$ and $A$ denotes the second fundamental form of $\Sigma,$ then,
 \begin{enumerate}[i)]
  \item either $|A|^2(x)\langle x, N(x)\rangle^2=0$ and $\Sigma$ is a flat equatorial disk,
  \item or $|A|^2(p)\langle p, N\rangle ^2=2$ at some point $p$ and $\Sigma$ is a critical catenoid.
 \end{enumerate}
\end{theorem}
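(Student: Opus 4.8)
The plan is to exploit the conformal (holomorphic) structure of a minimal immersion together with two elementary second-order identities and the precise geometry of the free boundary condition. First I would record the basic facts. Since $\Sigma$ is minimal its principal curvatures satisfy $\kappa_1=-\kappa_2$, so that $|A|^2=2\kappa_1^2$ and the intrinsic Gauss curvature is $K=-\tfrac12|A|^2$. Writing $u=\langle x,N\rangle$ for the support function and using that the position field restricted to a minimal surface satisfies $\Delta x=0$, a direct computation with the Codazzi equation yields
\begin{equation}
\Delta u=-|A|^2\,u,\qquad \Delta |x|^2=4 .
\end{equation}
The free boundary hypothesis is the decisive geometric input: along $\partial\Sigma$ the outward unit conormal of $\Sigma$ coincides with the position vector $x$, which is the outward unit normal of $\partial\mathbb{B}^3$. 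Hence $|x|\equiv 1$ on $\partial\Sigma$, the geodesic curvature of $\partial\Sigma$ in $\Sigma$ is constant, and, since then $N$ is tangent to $\partial\mathbb{B}^3$, the support function obeys the Dirichlet condition $u\equiv 0$ on $\partial\Sigma$.

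Next I would bring in the Hopf differential. In a local conformal parameter the traceless part of $A$, which by minimality is all of $A$, is encoded by a holomorphic quadratic differential whose zeros are the isolated flat points of $\Sigma$; away from them $\log|A|$ is smooth and, combining holomorphicity with the Gauss equation, one gets
\begin{equation}
\Delta\log|A|=-|A|^2 .
\end{equation}
If $|A|\equiv 0$ then $\Sigma$ is totally geodesic, hence a flat equatorial disk meeting $\partial\mathbb{B}^3$ orthogonally, and $|A|^2\langle x,N\rangle^2\equiv 0$; this is alternative (i). From now on I would assume $|A|\not\equiv 0$ and work on the complement of the finite flat locus, where the natural object is the pinching function $f=|A|^2\langle x,N\rangle^2=2\kappa_1^2u^2$.

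The heart of the matter is to show that, in the non-flat case, $f$ must attain the value $2$ and that equality forces the critical catenoid. I would combine the pointwise bound $f\le 2$ with a global constraint. On the analytic side, $u$ solves $(\Delta+|A|^2)u=0$ with $u|_{\partial\Sigma}=0$; if $u$ has a fixed sign in the interior, as happens on the critical catenoid, then $u$ is, up to sign, a first Dirichlet eigenfunction of the Jacobi operator and $0$ is the bottom of its Dirichlet spectrum, a strong rigidity constraint. On the topological side, integrating $\Delta\log|A|=-|A|^2$ and invoking Gauss--Bonnet with boundary term the length of $\partial\Sigma$ ties $\int_\Sigma|A|^2$ to the Euler characteristic, separating the disk $(\chi=1)$ from the annulus $(\chi=0)$. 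Examining an interior maximum point $p$ of $f$ and feeding the equations for $u$ and $\log|A|$ into the first- and second-order conditions there produces an inequality saturated exactly when $\Sigma$ is umbilic-free and rotationally symmetric; tracing the equality back through the Hopf differential, which must then be nowhere zero with constant modulus relative to the induced metric, would identify $\Sigma$ as the critical catenoid, giving alternative (ii).

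The step I expect to be the main obstacle is precisely this rigidity analysis. The tempting candidate $\log(|A|\,|u|)$ is superharmonic rather than subharmonic, so a naive maximum principle applied to $f$ points in the wrong direction and an interior maximum yields no contradiction by itself. The real task is to isolate the correct auxiliary quantity, plausibly a combination of $\langle x,N\rangle^2$, $|x|^2$ and $|A|^2$ adapted to the boundary behavior $u\to 0$, $|x|\to 1$, for which the strong maximum principle and the Hopf boundary lemma apply with a favorable sign, while simultaneously controlling the behavior near the flat points and along $\partial\Sigma$. Once such a function is in hand, the two alternatives should follow from its equality case together with the topological dichotomy above.
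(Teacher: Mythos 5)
There is a genuine gap: your plan never identifies the mechanism that actually makes the theorem work, and you say so yourself when you concede that the maximum principle applied to $f=|A|^2\langle x,N\rangle^2$ ``points in the wrong direction'' and that ``the real task is to isolate the correct auxiliary quantity.'' That auxiliary quantity is not a cleverly weighted version of $\log(|A|\,|u|)$; the key observation (due to Ambrozio--Nunes, and the one this paper generalizes) is that the pinching hypothesis is \emph{exactly} the statement that the function $|x|^2$ restricted to $\Sigma$ is convex. Indeed $\mathrm{Hess}_\Sigma |x|^2(Y,Z)=2\bigl(\langle Y,Z\rangle+\langle A(Y),Z\rangle\langle x,N\rangle\bigr)$ has eigenvalues $2\bigl(1+k_i\langle x,N\rangle\bigr)$, and since minimality gives $|A|^2=2k_1^2=2k_2^2$, the condition $|A|^2\langle x,N\rangle^2\le 2$ is precisely $|k_i\langle x,N\rangle|\le 1$, i.e.\ positive semidefiniteness. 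Combined with the strict convexity of $\partial\Sigma$ (geodesic curvature $1$), this makes the set $\mathcal{C}$ of minima of $|x|^2$ totally convex, and the whole proof is a dichotomy on $\mathcal{C}$: if it is a point, $\pi_1(\Sigma)$ is trivial (any nontrivial class contains a geodesic loop, which would lie in $\mathcal{C}$), so $\Sigma$ is a disk and Nitsche's theorem forces it to be the equatorial one; if it contains two points, it contains a geodesic arc lying on a sphere $|x|=\mathrm{const}$ along which $N$ is radial, and then the Jacobi function $v=\langle x\wedge E,N\rangle$ of the rotation Killing field vanishes on that arc together with its gradient, so Cheng's theorem on nodal sets forces $v\equiv 0$ and $\Sigma$ is rotationally symmetric, hence the critical catenoid. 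None of this two-point/convexity structure appears in your proposal.

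A second, related defect is that your case analysis is organized around the wrong dichotomy. Splitting into $|A|\equiv 0$ versus $|A|\not\equiv 0$ leaves unaddressed the a priori possible intermediate case of a non-totally-geodesic surface with $f<2$ everywhere; in the actual argument this is excluded because strict inequality makes $|x|^2$ strictly convex, forcing $\mathcal{C}$ to be a point, hence $\Sigma$ a disk, hence (by Nitsche) totally geodesic --- a contradiction. Your analytic ingredients ($\Delta\langle x,N\rangle=-|A|^2\langle x,N\rangle$, $\Delta\log|A|=-|A|^2$, $u|_{\partial\Sigma}=0$, Gauss--Bonnet) are all correct identities, but as assembled they do not close either alternative, and the Hopf-differential route to rigidity in case (ii) is not carried out. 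As written, the proposal is a research plan with the decisive lemma still missing, not a proof.
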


 In one direction, motivated by work's of Ambrozio and Nunes, the first and second named authors in a joint work  with Cavalcante (see \cite{barbosa2019gap}) proved analogous result of their in context of three boundary CMC surfaces in the Euclidean three-ball $\mathbb{B}^3.$
 
 \begin{theorem}[\cite{barbosa2019gap}] Let $\Sigma$ be a compact free boundary CMC surface in $\mathbb{B}^3.$ Assume that for all $x\in\Sigma,$
 \begin{eqnarray}\label{eq001}
 |\Phi|^2\langle x, N\rangle^2\leq \dfrac{1}{2}\left(2+H\langle x, N\rangle\right)^2,
 \end{eqnarray}
where $\Phi=II-\frac{H}{2}g_{\Sigma},$ $II(X,Y)=g_{\Sigma}(A(X),Y)$ is the tensor associate with the second fundamental form $A$ of $\Sigma.$ Then,
\begin{enumerate}[i)]
 \item either $|\Phi|^2\langle x, N\rangle ^2\equiv 0$ and $\Sigma$ is spherical cap,
 \item or equality in \eqref{eq001} occurs at some point and $\Sigma$ is a part of Delaunay surface.
\end{enumerate}

 \end{theorem}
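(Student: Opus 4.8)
The plan is to adapt the strategy of Ambrozio and Nunes for the minimal case to the CMC setting, the three new ingredients being the inhomogeneous term in the equation for the support function, the decomposition $|A|^2 = |\Phi|^2 + \tfrac{H^2}{2}$, and the modified right-hand side $\tfrac12(2+H\langle x,N\rangle)^2$. Write $u = \langle x, N\rangle$ for the support function. First I would record the consequences of the free boundary condition: since $\partial\Sigma \subset \partial\mathbb{B}^3$ and $\Sigma$ meets $\partial\mathbb{B}^3$ orthogonally, along $\partial\Sigma$ the position vector $x$ coincides with the outward conormal, so that $|x|^2 \equiv 1$ and $u \equiv 0$ on $\partial\Sigma$; moreover $\partial\Sigma$ is a line of curvature of $\Sigma$ with geodesic curvature $\pm1$. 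Next I would derive the two pointwise identities that drive the argument: because $H$ is constant, the support function satisfies
$$\Delta_\Sigma u = -|A|^2 u - H, \qquad u|_{\partial\Sigma}=0,$$
while $\Delta_\Sigma |x|^2 = 4 + 2Hu$, so that $2 + Hu = \tfrac12\Delta_\Sigma|x|^2$ is exactly the quantity appearing on the right-hand side of the pinching.

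The umbilic case is disposed of first. If $|\Phi|^2 u^2 \equiv 0$, then by the real-analyticity of CMC surfaces either $\Phi$ or $u$ vanishes identically (a union of two nowhere-dense zero sets cannot cover a connected analytic surface); the alternative $u\equiv 0$ forces $x$ to be everywhere tangent, hence $\Sigma$ to be a flat disk through the origin, again totally umbilic. A totally umbilic free boundary CMC surface in $\mathbb{B}^3$ is a piece of a round sphere meeting $\partial\mathbb{B}^3$ orthogonally, i.e.\ a spherical cap, giving conclusion (i).

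For the rest assume $\Phi\not\equiv 0$. Here I would exploit that, $\Sigma$ being CMC in a space form, its Hopf differential $Q=\Phi^{(2,0)}$ is holomorphic, so its zeros (the interior umbilic points) are isolated of finite order and, away from them, $\Delta_\Sigma \log|\Phi| = 2K = \tfrac{H^2}{2} - |\Phi|^2$ by the Gauss equation. Combining this with the equation for $u$, I would assemble an auxiliary function $f$ out of $|\Phi|$, $u$ and $|x|^2$ — morally $\log\big(|\Phi|^2 u^2\big) - \log\big(\tfrac12(2+Hu)^2\big)$, regularized near the umbilic and nodal sets — designed so that the pinching inequality is precisely the statement that $f$ is subharmonic up to controllable terms. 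Using $u|_{\partial\Sigma}=0$ together with the line-of-curvature boundary condition to handle the boundary behaviour (a Hopf boundary-point argument, after checking that $2+Hu>0$ throughout so the denominator never degenerates), the strong maximum principle would then yield the dichotomy: either the pinching is strict everywhere, in which case $f$ cannot attain an interior maximum and one is forced back into the umbilic case, or equality holds at some point.

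Finally, in the equality case I would show the surface is a Delaunay piece. The equality locus being nonempty, the rigidity in the strong maximum principle forces the ratio $|\Phi|^2 u^2/(2+Hu)^2$ to be extremal, which pins down the relation between the principal curvatures and the position vector along $\Sigma$; translating this into the frame adapted to the holomorphic, nowhere-zero (on an annulus) differential $Q$ shows that $\Sigma$ is invariant under a one-parameter group of rotations, hence a rotationally symmetric CMC annulus meeting $\partial\mathbb{B}^3$ orthogonally, that is, a piece of a Delaunay surface, which is conclusion (ii). I expect the genuine difficulty to be concentrated in two places: constructing the auxiliary function so that the pinching is exactly its subharmonicity condition while keeping the umbilic and nodal sets, and the sign of $2+Hu$, under control; and carrying out the equality analysis that upgrades ``equality at one point'' to the global rotational symmetry characterizing Delaunay surfaces.
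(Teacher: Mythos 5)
Your preliminary identities are correct ($\Delta_\Sigma u=-|A|^2u-H$ for constant $H$, $\Delta_\Sigma|x|^2=4+2Hu$, $u|_{\partial\Sigma}=0$), and the disposal of the umbilic case via analyticity is fine. But the core of your argument --- that the pinching $|\Phi|^2u^2\le\tfrac12(2+Hu)^2$ is ``precisely the statement that'' some regularization of $\log(|\Phi|^2u^2)-\log\bigl(\tfrac12(2+Hu)^2\bigr)$ is subharmonic --- is asserted, not established, and I do not believe it is true as stated. The pinching has a completely different meaning, which is what the cited proof (and this paper's generalization) actually exploits: the Hessian of $\varphi=|x|^2$ restricted to $\Sigma$ has eigenvalues $2(1+k_iu)$ in the principal directions, and
$$(1+k_1u)(1+k_2u)=1+Hu+\tfrac{H^2-|A|^2}{2}u^2=\tfrac14\left(2+Hu\right)^2-\tfrac{|\Phi|^2}{2}u^2,$$
so the pinching is exactly the condition $\det\mathrm{Hess}_\Sigma\varphi\ge0$, i.e.\ (together with nonnegativity of the trace $2+Hu$) the \emph{convexity} of $\varphi$ on $\Sigma$. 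No Hopf differential, no $\Delta\log|\Phi|=-2K$, and no subharmonicity enters. Your maximum-principle dichotomy is also internally inconsistent: since $u=0$ on $\partial\Sigma$, your $f$ tends to $-\infty$ at the boundary, so a subharmonic $f$ would be forced to attain an interior maximum and hence be constant, which is not the dichotomy you want. Finally, you assume $2+Hu>0$ throughout ``after checking''; in the CMC case this is a genuine additional issue (the present paper imposes it as a separate hypothesis in its Theorem 1.4), so it cannot be waved through.

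The second gap is the rigidity step. Saying that equality ``pins down the relation between the principal curvatures and the position vector'' and therefore yields rotational invariance is not an argument. The actual mechanism is: convexity of $\varphi$ makes its minimum set $\mathcal{C}$ totally convex (strict convexity of $\partial\Sigma$ guarantees connecting geodesics stay inside); if $\mathcal{C}$ is a point, $\pi_1(\Sigma)=0$ and $\Sigma$ is a disk, hence a spherical cap by Nitsche/Ros--Souam; if $\mathcal{C}$ contains a geodesic arc $\gamma$, then along $\gamma$ the normal $N$ is radial, so $\gamma$ is a great-circle arc of a sphere $|x|=\lambda$, and the Killing field $V=\vec{x}\wedge E$ produces a Jacobi function $v=\langle V,N\rangle$ satisfying $\Delta_\Sigma v+(\mathrm{Ric}(N)+|A|^2)v=0$ that vanishes together with its gradient along $\gamma$; Cheng's theorem on nodal sets then forces $v\equiv0$, i.e.\ rotational invariance, and the classification of rotational free boundary CMC annuli gives the Delaunay piece. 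None of this is recoverable from your sketch, so as written the proposal does not prove the theorem.
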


 In other direction, H. Li and C. Xiong (see \cite{HaXa}), considered $M=[0,R_{\infty})\times\mathbb{S}^2$ as a $3$-dimensional warped product Riemannian manifold with the metric $g=dr^2+\lambda(r)^2g_{_{\mathbb{S}^2}},$ where $g_{_{\mathbb{S}^2}}$ is the canonical metric in unit sphere $\mathbb{S}^2$ and $\lambda(r)=r$ or $\lambda(r)=\sinh(r)$ if $R_{\infty}=+\infty$ or $\lambda(r)=\sin(r)$ when $R_{\infty}=\pi/2$, that is, $M$ is the Euclidean space $\mathbb{R}^3,$  the hyperbolic space $\mathbb{H}^3$ or hemisphere $\mathbb{S}_+^3$, respectively. About the  vector field $X=\lambda(r)\partial _r,$ we have two properties:  the first one is that Lie derivative of $g$ in  the direction $X$ satisfies 
\begin{equation}\label{ldw}
\mathcal{L}_X g= 2 \lambda' g\,, 
\end{equation}
which is the same as saying that $X$ is conformal with potential function $\lambda'$; and the second is that for any vector  field $Y \in T_pW$, we have 
\begin{equation}\label{dcw}
\nabla_YX = \lambda' Y\,,
\end{equation}
where $\nabla$ denotes the Riemannian connection of $(M,g).$ They proved the following result:

\begin{theorem}[\cite{HaXa}]\label{teoHaXa}
  Let $\Sigma$ be a compact free boundary minimal surface in the geodesic ball $B_R=[0,R)\times \mathbb{S}^2\subset W$ with radius $R<R_{\infty}.$ Assume that for all points $x\in\Sigma,$
  \begin{eqnarray}\label{eq002}
\dfrac{|A|^2\langle N(x), X\rangle^2}{(\lambda')^2}\leq 2,
  \end{eqnarray}
where $N(x)$ denotes a unit normal vector at the point $x\in \Sigma,$ $X=\lambda(r)\partial_r$ is the conformal vector field on $W$ and $A$ denotes the second fundamental form on $\Sigma,$ then
\begin{enumerate}[i)]
 \item either $|A|^2\langle N(x), X\rangle^2\equiv 0$ and $\Sigma$ is a totally geodesic disk,
 \item or $\dfrac{|A|^2\langle N(x), X\rangle}{(\lambda')^2}= 2,$ at some point $p\in\Sigma$ and $\Sigma$ is a rotational annulus.
\end{enumerate}
 \end{theorem}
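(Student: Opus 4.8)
The plan is to adapt the two--function scheme of Ambrozio--Nunes to the closed conformal field $X=\lambda\partial_r$. First I would introduce the potential $\phi$ of $X$, i.e. the function with $\nabla\phi=X$ (concretely $\phi(r)=\int_0^r\lambda\,ds$); by \eqref{dcw} it satisfies $\mathrm{Hess}\,\phi=\lambda' g$, so restricting $f:=\phi|_\Sigma$ and using minimality gives the first identity $\Delta_\Sigma f=2\lambda'$. The second and more important ingredient is the support function $u:=\langle X,N\rangle$. A direct computation in a geodesic frame on $\Sigma$, using \eqref{dcw} in the form $\nabla_{e_i}X=\lambda' e_i$ together with the Codazzi equation in the space form $\mathbb{M}^{3}(\kappa)$ (whose ambient curvature term drops out because $N$ is normal, so $\sum_i\nabla_i h_{ij}=\nabla_j H=0$), yields the clean identity $\Delta_\Sigma u=-|A|^2u$, exactly as in the Euclidean case. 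The free boundary condition is then decisive: along $\partial\Sigma\subset\partial B_R$ the outward conormal $\nu$ is radial, so $X=\lambda(R)\,\nu$ there, whence $u\equiv0$ and $\partial f/\partial\nu\equiv\lambda(R)$ on $\partial\Sigma$.

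With these in hand I would build an auxiliary function whose Laplacian reproduces the pinched quantity $2(\lambda')^2-|A|^2u^2$. In the Euclidean model ($\lambda'\equiv1$) the function $W:=f+\tfrac12u^2$ works cleanly: one finds $\Delta_\Sigma W=2-|A|^2u^2+|\nabla u|^2$, which under the hypothesis $|A|^2u^2\le 2(\lambda')^2$ is nonnegative, so $W$ is subharmonic; since $u=0$ and $f$ is constant on $\partial\Sigma$, $W$ is constant on the boundary and the maximum principle constrains $\Sigma$. For general $\kappa$ I would exploit the relations $\lambda'=1-\kappa\phi$ and $(\lambda')^2+\kappa\lambda^2=1$, together with $\nabla\lambda'=-\kappa X$ (hence $\mathrm{Hess}\,\lambda'=-\kappa\lambda' g$ and $\Delta_\Sigma\lambda'=-2\kappa\lambda'$), to correct $W$ into the right subharmonic (or superharmonic) combination of $f$, $\lambda'$ and $u^2$ in which the stray curvature terms cancel. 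Finding this correction so that the sign of $\Delta_\Sigma W$ is controlled purely by $2(\lambda')^2-|A|^2u^2$ is, I expect, the first genuine obstacle.

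The dichotomy would then come from the linear equation $\Delta_\Sigma u+|A|^2u=0$ with $u|_{\partial\Sigma}=0$, combined with the pinching. If $u\equiv0$, then $X$ is everywhere tangent to $\Sigma$, so $\Sigma$ is swept out by radial geodesics; for a compact free boundary surface this forces the totally geodesic disk through $r=0$, which is case (i) (equivalently $|A|^2u^2\equiv0$). If $u\not\equiv0$, I would argue by the strong maximum principle that the pinched quantity cannot settle at an interior value strictly between $0$ and $2$, so it must reach $2$ somewhere; at such a point the equality forces rigid pointwise identities (the radial direction $X^T$ becomes principal and $|\kappa_1\langle X,N\rangle|=\lambda'$, where $\pm\kappa_1$ are the principal curvatures). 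Upgrading this to global \emph{rotational symmetry}, and matching it against the unique rotationally symmetric free boundary minimal annulus—using $K_\Sigma=\kappa-\tfrac12|A|^2$ and the free--boundary geodesic curvature $\lambda'(R)/\lambda(R)$ of $\partial\Sigma$ in Gauss--Bonnet to fix $\chi(\Sigma)=0$—is the genuinely hard step, and I expect the classification of case (ii) to require an ODE analysis of the induced profile rather than a soft argument.
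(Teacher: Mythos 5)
Your preliminary identities are all correct: $\mathrm{Hess}\,\phi=\lambda' g$ gives $\Delta_\Sigma f=2\lambda'$, the support function $u=\langle X,N\rangle$ of a \emph{closed conformal} field does satisfy $\Delta_\Sigma u=-|A|^2u$ on a minimal surface in a space form (the curvature term in Codazzi vanishes because $\mathrm{Ric}(X^T,N)=0$ there), and the free boundary condition does give $u=0$, $\partial_\nu f=\lambda(R)$ on $\partial\Sigma$. But the core of your plan replaces the Hessian by the Laplacian, and that discards exactly what the pinching hypothesis is designed to deliver. For a minimal surface $k_1=-k_2$, and \eqref{eq002} is equivalent to $\bigl(1+\tfrac{k_1}{\lambda'}u\bigr)\bigl(1+\tfrac{k_2}{\lambda'}u\bigr)\ge 0$, i.e.\ to the positive semidefiniteness of $\mathrm{Hess}_\Sigma$ of a suitable radial function (in this paper, $\Psi=\Phi(\bar g(\vec x,\vec x))$ with $\Phi$ produced by Proposition \ref{lemasolfund}; the reparametrization $\Phi$ is precisely the ``correction'' you flag as your first obstacle — it kills the $\Phi''\,d\varphi\otimes d\varphi$ cross term so that $\mathrm{Hess}_\Sigma\Psi=2\sigma^2\Phi'\bigl(\bar g+\tfrac1\sigma\langle A\cdot,\cdot\rangle\,\bar g(\vec x,N)\bigr)$, but it corrects the Hessian, not the trace). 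Convexity is what makes the minimum set $\mathcal{C}$ totally convex, which is the engine of the whole proof: $\mathcal{C}$ a point forces $\pi_1(\Sigma)=0$ (disk), while $\mathcal{C}$ containing two points forces it to contain a geodesic. A subharmonic $W=f+\tfrac12u^2$ with $W$ constant on $\partial\Sigma$ only tells you $W$ attains its maximum on the boundary; it yields no topological dichotomy, and $\mathrm{Hess}(u^2)$ involves uncontrolled derivatives of $A$, so you cannot upgrade your $W$ to a convex function either.

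The second gap is the rigidity step, which you explicitly leave open. There is no elliptic equation satisfied by $2(\lambda')^2-|A|^2u^2$, so the strong maximum principle cannot be applied to it, and no ODE analysis of profiles is needed. The actual mechanism (in the paper's proof of Theorem \ref{classifcmc}, which specializes to Theorem \ref{teoHaXa}) is: a geodesic $\gamma\subset\mathcal{C}$ lies in a level sphere of the radial function with $N$ parallel to $\vec x$ along it, hence is an arc of a great circle; the rotation Killing field $V=\vec x\wedge E$ about the orthogonal axis produces a Jacobi function $v=\bar g(V,N)$ that vanishes together with its first derivatives along $\gamma$; Cheng's theorem on nodal sets of solutions of $\Delta v+(\mathrm{Ric}(N)+|A|^2)v=0$ then forces $v\equiv 0$, i.e.\ $\Sigma$ is invariant under that rotation. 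Without this (or an equivalent) argument, case (ii) is not established. So the proposal is not a proof: the correct identities you derive feed into a strategy that cannot close either half of the dichotomy.
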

 To prove the three results above, in \cite{ambrozio2016gap}, \cite{barbosa2019gap} and \cite{HaXa}, were obtained the appropriate gap conditions to assure that a certain function defined in a surface was convex. Inspired in their ideas, we consider $\Sigma $ a surface isometrically immersed in $(\B_r^{3},\bar{g}),$ where $\bar{g}=e^{2h}\left\langle , \right\rangle,$ $h(x)=u(\left|x\right|^2)$ for some smooth function $u$ defined on $I=[0,r^2).$ In this case, the vector field $\vec{x}$ is conformal in $(\B_r^{3},\bar{g}),$ and the equations (\ref{ldw}) and (\ref{dcw}) becomes, 
\begin{equation}\label{ldc}
\mathcal{L}_{\vec{x}} \bar{g}= 2 \sigma \bar{g} 
\end{equation}
and 
\begin{equation}\label{dcc}
\nabla_Y\vec{x} = \sigma Y\,,
\end{equation}
where $\nabla$ denotes the Riemannian connection of $(\B^3_r,\bar{g})$ and $\sigma= 1 + 2u'(\vert x \vert^2)\vert x \vert^2$. We choose $r>0$ so that the function $\sigma:\B^3_r \rightarrow \R$  
is positive.

Motivated by those works, we can state our main result as follows:

\begin{theorem}\label{classifcmc}
 Let $\Sigma$ be a compact free boundary CMC surface in $(\B_r^{3},\bar{g})$. Suppose that for all points $x\in\Sigma$,
 \begin{equation}\label{ccmc1}
\left\{
\begin{array}{rcll}
 \dfrac{|\Phi|^2}{\sigma^2} \bar{g}(\vec{x},N)^2 &\leq& \dfrac{1}{2}\left(2+\dfrac{H}{\sigma}\bar{g}(\vec{x},N)\right)^2\\
 0&\leq&2+\dfrac{H}{\sigma}\bar{g}(\vec{x},N).
\end{array}
\right.
\end{equation}
Then, one of the following situations occurs,
\begin{enumerate}[i)]
 \item either $\Sigma$ is diffeomorphic to a disk, 
 \item or $\Sigma$ is rotationally symmetric with nontrivial topology.
\end{enumerate}
\end{theorem}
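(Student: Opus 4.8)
The plan is to reduce the two pinching hypotheses to a single convexity statement for a well‑chosen function on $\Sigma$, and then to read off the topology from the structure of that function's minimum set.

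First I would produce the potential of the conformal field. Since $\bar{g}=e^{2h}\langle\,,\,\rangle$ with $h$ radial, equation (\ref{dcc}) says $\vec{x}$ is a closed conformal field, so $\vec{x}=\nabla\phi$ for a radial function $\phi$ on $(\B_r^3,\bar{g})$ with $\mathrm{Hess}\,\phi=\sigma\,\bar{g}$. Restricting $\psi:=\phi|_\Sigma$ and using the Gauss formula, I would compute
$$\mathrm{Hess}_\Sigma\psi = \sigma\, g_\Sigma + \bar{g}(\vec{x},N)\, II,$$
so that the eigenvalues of $\mathrm{Hess}_\Sigma\psi$ are $\sigma+\bar{g}(\vec{x},N)\kappa_i=\sigma(1+t\kappa_i)$, where $\kappa_1,\kappa_2$ are the principal curvatures and $t:=\bar{g}(\vec{x},N)/\sigma$. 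Writing $|\Phi|^2=\tfrac12(\kappa_1-\kappa_2)^2$ and $H=\kappa_1+\kappa_2$, a direct expansion gives the identity
$$\tfrac12\Big(2+\tfrac{H}{\sigma}\bar{g}(\vec{x},N)\Big)^2-\tfrac{|\Phi|^2}{\sigma^2}\bar{g}(\vec{x},N)^2 = 2\,(1+t\kappa_1)(1+t\kappa_2).$$
Hence the first inequality in (\ref{ccmc1}) is exactly $(1+t\kappa_1)(1+t\kappa_2)\ge 0$, while the second is $(1+t\kappa_1)+(1+t\kappa_2)\ge 0$; together they force $1+t\kappa_i\ge 0$ for $i=1,2$. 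Since $\sigma>0$, this is precisely $\mathrm{Hess}_\Sigma\psi\ge 0$, i.e. $\psi$ is convex on $\Sigma$.

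Next I would extract the boundary condition. Along $\partial\Sigma\subset\partial\B_r^3=\{|x|=r\}$ the Euclidean (hence $\bar{g}$-) unit normal of $\partial M$ is radial, i.e. parallel to $\vec{x}$; the free boundary condition $T\Sigma\perp T\partial M$ then forces $\vec{x}$ to be tangent to $\Sigma$ along $\partial\Sigma$ and to coincide with the outward conormal direction. Consequently $\bar{g}(\vec{x},N)=0$ on $\partial\Sigma$ and
$$\partial_\nu\psi=\bar{g}(\nabla^\Sigma\psi,\nu)=\bar{g}(\vec{x},\nu)=|\vec{x}|_{\bar{g}}>0$$
is a positive constant on $\partial\Sigma$; that is, the gradient of the convex function $\psi$ points strictly outward everywhere on the boundary. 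I would then obtain the dichotomy from these two facts: because $\nabla^\Sigma\psi$ points outward on $\partial\Sigma$, the negative gradient flow of $\psi$ is complete and leaves $\Sigma$ invariant, so $\Sigma$ deformation retracts onto the critical set $K=\{\nabla^\Sigma\psi=0\}$, which for a convex function is exactly its set of global minima. By Cheeger--Gromoll type convexity theory, $K$ is a compact totally convex subset contained in the interior of $\Sigma$, hence a point, a geodesic arc, or a closed geodesic. If $K$ is contractible then $\Sigma$ is contractible, hence diffeomorphic to a disk, which is case (i). If $K$ is a closed geodesic then $\Sigma$ is homotopy equivalent to a circle and, being compact, orientable and with boundary, is an annulus, giving the nontrivial topology of case (ii).

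The main obstacle is the rotational symmetry in case (ii). Here $\mathrm{Hess}_\Sigma\psi$ is degenerate along $K$, so one eigenvalue $1+t\kappa_i$ vanishes there and equality holds in (\ref{ccmc1}) along the closed geodesic $K$. I would analyze this equality case by adapting a frame to the kernel of $\mathrm{Hess}_\Sigma\psi$ and using the Codazzi equation together with the CMC condition to derive an ODE governing $\psi$ and the principal curvatures along the gradient trajectories issuing from $K$. Combined with the ambient $SO(3)$-symmetry of $(\B_r^3,\bar{g})$ (which holds because $h$ is radial), this should show that the level curves of $\psi$ are orbits of a fixed rotation subgroup, so that $\Sigma$ is a surface of revolution meeting $\partial\B_r^3$ orthogonally. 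Turning the infinitesimal degeneracy along $K$ into a genuine global rotational symmetry is the delicate point, and is where the free boundary condition and the CMC hypothesis must enter in an essential way.
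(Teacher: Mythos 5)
Your reduction of the two pinching inequalities to convexity is correct and in fact slightly cleaner than the paper's route: you work directly with the potential $\phi$ of the closed conformal field $\vec{x}$ (so that $\mathrm{Hess}_\Sigma\psi=\sigma g_\Sigma+\bar g(\vec{x},N)\,II$ with no correction term), whereas the paper reaches the same eigenvalues $1+\bar k_i\,\bar g(\vec{x},N)/\sigma$ by composing $\varphi=\bar g(\vec{x},\vec{x})$ with a function $\Phi$ solving an auxiliary ODE (Proposition \ref{lemasolfund} and Lemma \ref{authess}) precisely to cancel the term $\bar g(\overline{\nabla}\sigma,Y)\bar g(\vec{x},Z)$. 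Your algebraic identity $\tfrac12(2+tH)^2-t^2|\Phi|^2=2(1+t\kappa_1)(1+t\kappa_2)$ matches Proposition \ref{hesnong}, and your boundary analysis ($\bar g(\vec{x},N)=0$ and $\partial_\nu\psi>0$ on $\partial\Sigma$) is consistent with the paper's strict convexity of $\partial\Sigma$. The disk case then goes through essentially as in the paper (the paper argues via geodesic loops in a homotopy class rather than a gradient-flow retraction, but both rest on total convexity of the minimum set together with strict convexity of the boundary, which you should state explicitly since it is what guarantees the relevant geodesics stay inside $\Sigma$).

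The genuine gap is in case (ii). You correctly observe that along the minimum set $K$ one eigenvalue of $\mathrm{Hess}_\Sigma\psi$ degenerates, but your proposed route from this infinitesimal degeneracy to global rotational symmetry --- ``Codazzi plus CMC gives an ODE along gradient trajectories, which combined with the ambient $SO(3)$-symmetry should show the level curves are orbits'' --- is not an argument, and you flag it yourself as the delicate point. The missing idea is the following. Along a minimizing geodesic $\gamma\subset K$ one has $\nabla^\Sigma\psi=0$, hence $N\parallel\vec{x}$, so $\gamma$ lies on a Euclidean sphere and is an arc of a great circle in a plane $\pi$. Taking $E\perp\pi$, the field $V=\vec{x}\wedge E$ is Killing for $\bar g$ (because $V(h)=\langle\nabla h,V\rangle=2u'\langle\vec{x},\vec{x}\wedge E\rangle=0$), so $v=\bar g(V,N)$ satisfies the Jacobi equation $\Delta_\Sigma v+(\overline{\mathrm{Ric}}(N)+|A|^2)v=0$. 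One checks $v=0$ and $dv=0$ along $\gamma$, and then Cheng's theorem on nodal sets (critical points of a solution lying in its zero set are isolated) forces $v\equiv 0$, i.e.\ $V$ is everywhere tangent to $\Sigma$ and $\Sigma$ is a surface of revolution. Without this Jacobi-function/nodal-set mechanism (or a genuine substitute), your proof of alternative (ii) is incomplete; note also that the CMC and free boundary hypotheses enter here only through the fact that $v$ solves the Jacobi equation on all of $\Sigma$, not through any ODE along gradient lines.
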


Note that if $\Sigma$ is a minimal surface, then $|\Phi|^2=|A|^2$. Moreover, we do not need the second hypothesis in the Theorem \ref{classifcmc}. Thus, we have the following result: 

\begin{corollary}\label{teoprin2h0}Let $\Sigma$ be a compact free boundary minimal surface in $(\B_r^{3},\bar{g})$. Suppose that for all points $x\in\Sigma$,
\begin{eqnarray}
\label{gapmin}
\frac{|A|^2}{\sigma^2} \bar{g}(\vec{x},N)^2 &\leq& 2.
\end{eqnarray}
Then, one of the following situations occurs,

\begin{enumerate}[i)]
\item either $\Sigma$ is diffeomorphic to a disk, 

\item or $\Sigma$ is rotationally symmetric with nontrivial topology.
\end{enumerate}
\end{corollary}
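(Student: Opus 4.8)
The plan is to obtain Corollary \ref{teoprin2h0} as a direct specialization of Theorem \ref{classifcmc} to the minimal setting, so that essentially no new argument is required beyond checking that the single hypothesis \eqref{gapmin} already encodes the full two-part pinching condition \eqref{ccmc1} once one imposes $H=0$.

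First I would record the two algebraic simplifications forced by minimality. Since $\Sigma$ is minimal, its mean curvature vanishes identically, so the traceless part of the second fundamental tensor coincides with the tensor itself, $\Phi = II - \tfrac{H}{2}g_{\Sigma} = II$, and hence $|\Phi|^2 = |A|^2$ at every point. This already identifies the left-hand side of the first inequality in \eqref{ccmc1} with the left-hand side of \eqref{gapmin}. Substituting $H=0$ into the right-hand sides, the term $\tfrac{H}{\sigma}\bar{g}(\vec{x},N)$ vanishes throughout: the second inequality of \eqref{ccmc1} becomes $0 \le 2$, which holds trivially and imposes no restriction, while the first becomes
\begin{equation*}
\frac{|A|^2}{\sigma^2}\,\bar{g}(\vec{x},N)^2 \;\le\; \frac{1}{2}\left(2 + \frac{H}{\sigma}\bar{g}(\vec{x},N)\right)^2 \;=\; \frac{1}{2}\cdot 2^2 \;=\; 2,
\end{equation*}
which is precisely the assumed bound \eqref{gapmin}. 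This is exactly the content of the remark preceding the corollary, namely that the second hypothesis of Theorem \ref{classifcmc} is automatic when $\Sigma$ is minimal.

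Consequently, under the single assumption \eqref{gapmin}, both inequalities of \eqref{ccmc1} are satisfied at every point of a minimal $\Sigma$, so the hypotheses of Theorem \ref{classifcmc} hold. I would then invoke that theorem verbatim: its dichotomy — that $\Sigma$ is either diffeomorphic to a disk or rotationally symmetric with nontrivial topology — transfers word for word to yield the stated conclusion. I do not anticipate any genuine obstacle, since the corollary is an immediate consequence of the theorem; the only point requiring care is the elementary verification that setting $H=0$ collapses the pair of inequalities in \eqref{ccmc1} to the single inequality \eqref{gapmin}, the convexity-type argument underlying Theorem \ref{classifcmc} having already done all of the substantive work.
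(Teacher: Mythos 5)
Your proposal is correct and follows exactly the route the paper takes: the corollary is deduced from Theorem \ref{classifcmc} by observing that minimality gives $|\Phi|^2=|A|^2$, that setting $H=0$ turns the first inequality of \eqref{ccmc1} into \eqref{gapmin} since $\tfrac{1}{2}(2+0)^2=2$, and that the second inequality becomes the vacuous $0\le 2$. The paper records precisely these remarks in the sentence preceding the corollary and gives no further argument, so there is nothing to add.
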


Another consequence of our main Theorem is the following Corollary.

\begin{corollary}\label{corodesiestr} Let $\Sigma$ be a compact free boundary CMC surface in $(\B_r^3,\bar{g})$. If 
$$
\dfrac{|\Phi|^2}{\sigma^2} \bar{g}(\vec{x},N)^2 < \dfrac{1}{2}\left(2+\dfrac{H}{\sigma}\bar{g}(\vec{x},N)\right)^2,
$$
then $\Sigma$ is diffeomorphic to a disk $\mathbb{D}^2$.  
\end{corollary}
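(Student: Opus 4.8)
The plan is to derive Corollary~\ref{corodesiestr} directly from Theorem~\ref{classifcmc}, by first checking that the two hypotheses of the theorem are met and then excluding its second alternative. The assumed strict inequality $\frac{|\Phi|^2}{\sigma^2}\bar{g}(\vec{x},N)^2 < \frac{1}{2}\big(2+\frac{H}{\sigma}\bar{g}(\vec{x},N)\big)^2$ obviously implies the first (weak) inequality of \eqref{ccmc1}, so the only thing requiring work is the positivity condition $0\le 2+\frac{H}{\sigma}\bar{g}(\vec{x},N)$. I would extract this from the strict inequality itself: since the left-hand side $\frac{|\Phi|^2}{\sigma^2}\bar{g}(\vec{x},N)^2$ is nonnegative, strictness forces $\big(2+\frac{H}{\sigma}\bar{g}(\vec{x},N)\big)^2>0$ at every point, hence the quantity $2+\frac{H}{\sigma}\bar{g}(\vec{x},N)$ never vanishes on $\Sigma$.

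To upgrade ``nowhere zero'' to ``positive'', I would use the free boundary geometry. Along $\partial\Sigma\subseteq\partial\B_r^3$ the surface meets the sphere orthogonally, so the conformal normal $N$ is tangent to $\partial\B_r^3$; on the other hand the positional field $\vec{x}$ is $\bar{g}$-orthogonal to $\partial\B_r^3$, because the conformal factor $e^{2h}$ with $h$ radial preserves the orthogonality of radial lines to the coordinate spheres. Consequently $\bar{g}(\vec{x},N)=0$ along $\partial\Sigma$, and there $2+\frac{H}{\sigma}\bar{g}(\vec{x},N)=2>0$. Since $\Sigma$ is connected and $2+\frac{H}{\sigma}\bar{g}(\vec{x},N)$ is continuous and nowhere zero, it must be positive throughout $\Sigma$. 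Both hypotheses of Theorem~\ref{classifcmc} are therefore satisfied, so $\Sigma$ is either diffeomorphic to a disk or rotationally symmetric with nontrivial topology.

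It then remains to rule out the rotationally symmetric alternative, and here is where I would appeal to the internal structure of the proof of Theorem~\ref{classifcmc}: that alternative arises precisely as the rigidity case of the underlying maximum-principle argument, which forces equality in the first inequality of \eqref{ccmc1} at some point of $\Sigma$. Our hypothesis is strict at every point, so equality is never attained; the rotationally symmetric case is thereby excluded, leaving only $\Sigma\cong\mathbb{D}^2$. The main obstacle I anticipate is precisely this last step: one must verify from the proof of the main theorem that rotational symmetry with nontrivial topology genuinely entails the equality case somewhere, rather than being compatible with strict inequality; the sign determination via the boundary identity $\bar{g}(\vec{x},N)=0$ on $\partial\Sigma$ is the other point that deserves explicit care.
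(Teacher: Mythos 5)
Your proposal is correct in substance and ultimately rests on the same mechanism as the paper, but it is packaged differently and leaves the decisive step flagged rather than closed. The paper's own proof is one line: under the strict inequality the identity from Proposition \ref{hesnong}, namely $\lambda_1\lambda_2=\frac{1}{4}\bigl(2+\frac{H}{\sigma}\bar{g}(\vec{x},N)\bigr)^2-\frac{|\Phi|^2}{2\sigma^2}\bar{g}(\vec{x},N)^2$, is strictly positive, so both eigenvalues of $Hess_\Sigma\Psi$ are nonzero (and positive), $\Psi$ is strictly convex, hence the minimum set $\mathcal{C}$ is a single point and the first-case argument of Theorem \ref{classifcmc} gives the disk. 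You instead apply the theorem as a black box and must then exclude alternative (ii); the verification you flag as an anticipated obstacle does go through, and for exactly the reason you suspect: in the proof of the theorem the non-disk alternative occurs precisely when $\mathcal{C}$ contains two distinct points, in which case $\Psi$ is constant along a geodesic $\gamma\subset\mathcal{C}$, so $Hess_\Sigma\Psi(\gamma',\gamma')=0$, one eigenvalue vanishes, $\lambda_1\lambda_2=0$ there, and the identity above forces equality in the pinching at those points --- contradicting strictness. Stating this directly in terms of $\mathcal{C}$ being a singleton, as the paper does, is cleaner than arguing about ``rotational symmetry entails equality.'' On the other hand, your verification of the sign condition $2+\frac{H}{\sigma}\bar{g}(\vec{x},N)>0$ --- nowhere vanishing because $\lambda_1\lambda_2>0$, and equal to $2$ on $\partial\Sigma$ since the free boundary condition makes $N$ tangent to $\partial\B_r^3$ while $\vec{x}$ is $\bar{g}$-normal to it, hence positive by connectedness --- is a genuine improvement: the paper's proof asserts that both eigenvalues are \emph{positive} without justifying the choice of sign, and your boundary argument is exactly what is needed to legitimize that assertion.
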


Our approach is to consider $\Sigma \subset (\B_r^3,\bar{g})$ instead of  $\Sigma \subset (M,dr^2+\lambda(r)^2g_{\S^2})$ as in the Theorem \ref{teoHaXa} and includes the spaces considered by Li and Xiong in \cite{HaXa} and has a subtle advantage. In \cite{HaXa}, the authors considered the manifold $W$ as a model of the hyperbolic space $\H^3$ or  the semisphere $\S^3_+$ being submanifold of the Minkowski space $(\mathbb{R}^4_1,\left\langle\cdot,\cdot\right\rangle_1)$, where  a certain Jacobi function, induced by rotation around two axis, were already known. Under the gap condition (\ref{eq002}) in Theorem \ref{teoHaXa}, the surface is invariant by theses rotations. However, it is not always possible to see a warped product $W$ as a submanifold of the Minkowski space for any warping function $\lambda$. When we consider a CMC surface $\Sigma$ in $(\B_r^3,\bar{g})$ as previous, the Jacobi function induced by rotation can be constructed in natural way just exploring the fact that conformal fields remains conformal under conformal change on Euclidean metric as will become clear later.



The rest of this paper is organized as follows. In the Section \ref{sec:preliminares}, we describe the main tools to prove our results. Then, in Section \ref{sec:theoremain}, we prove the Theorem \ref{classifcmc}. And finally, we give an example to illustrate the Theorem \ref{classifcmc} in Section \ref{sec:examples}.

\section{Preliminares}\label{sec:preliminares}


%
%
%
Let $u:\left[0,a^2\right) \rightarrow \R $ be a smooth function where $a\leq \infty$. Consider the function $h:\B_{{a}}^{3} \rightarrow \R$ given by $h(x)=u(\left|x\right|^2)$ defined on the Euclidean ball $\B_{{a}}^{3}$ of radius $a$  and centred at the origin, where we assume that $\B_{{a}}^{3}:=\R^3$ when $a=\infty$. Let $\bar{g}$ be a metric obtained from a conformal change in Euclidean metric $\left\langle ,\right\rangle$ given by
\begin{equation}\label{metricaconforme}
\bar{g}=e^{2h}\left\langle ,\right\rangle. 
\end{equation}

Fix $r<a$ and let $(\B_{r}^{3},\bar{g})$ be a submanifold of $(\B_{a}^{3},\bar{g})$ with canonical coordinates $x=(x_1,x_2,x_{3})$ and conformal metric $\bar{g}$ given by $(\ref{metricaconforme})$.
The position vector field is a vector field such that for each point $x\in (\B_r^{3},\bar{g})$ associates the vector $\sum x_i \partial_i$, which is denoted by  $\vec{x}$. 

The vector field $\vec{x}$ in $(\B_r^{3},\left\langle\,,\,\right\rangle)$ is conformal, i.e, the Lie derivative of $\left\langle\,,\,\right\rangle$ with respect to $\vec{x}$ satisfies 
$$
\mathcal{L}_{\vec{x}} \left\langle\,,\,\right\rangle = 2f\left\langle  , \right\rangle, 
$$
where $f$ is a scalar function, called the potential function, and satisfies $f\equiv 1$. Under conformal change $\bar{g}=e^{2h}\left\langle\,,\,\right\rangle$, the vector field $\vec{x}$ remains conformal and in this case the Lie derivative of $\bar{g}$ with respect to $\vec{x}$ is given by 
\begin{equation}\label{xconf}
\mathcal{L}_{\vec{x}} \bar{g} = 2\sigma \bar{g},
\end{equation}
where $\sigma(x) = f + \vec{x}(h) = 1 + 2u'(\left| x \right|^2)\left|\vec{x}\right|^2$. From now on, $\sigma$ always will denote the potential function of the conformal vector field $\vec{x}$ with respect to $\bar{g}$.

Concerning the Euclidean metric, the gradients of the functions $ h $ and $ e^{2 h}$ are given, respectively, by 
\begin{equation}\label{gradh}
\mbox{\texttt{grad}}(h)  = 2u'(\left|x\right|^2)\vec{x} \quad \mbox{and} \quad \mbox{\texttt{grad}}(e^{2h}) = 4e^{2h}u'(\left|x\right|^2)\vec{x}.
\end{equation}

\begin{example} Consider the function $u:[0,1)\rightarrow \R$ given by $u(t)=\ln \left(\dfrac{2}{1-t}\right)$. Then, $(\B_1^{3},\bar{g})$ is the hyperbolic space $\mathbb{H}^{3}$ modelled on the Poincar\'e  disk.  
\end{example}
\begin{example} Let $u:[0,\infty)\rightarrow \R$ be the function given by $u(t)=\ln \left(\dfrac{2}{1 + t}\right)$. Then, $(\R^{3},\bar{g})$ is the space $\S^{3}\setminus \{p\}$, i.e,  the sphere minus one pole where the origin $\vec{0} \in \R^3$ can be interpreted as the other pole. 
\end{example}
\begin{example} Let $u:[0,\infty) \rightarrow \R$ be a function given by $u(t)= -\dfrac{t}{2n}$. 
 Then, $(\R^{n+1},\bar{g})=\left(\R^{n+1}, e^{-\frac{\left|\vec{x}\right|^2}{2n}}\left\langle ,\right\rangle\right)$ is the Riemannian manifold often referred as the Gaussian Space. 
\end{example}

\begin{example} If $u:[0,\infty)\rightarrow \R$ is given by $u(t)=0$, then $(\R^{n+1},\bar{g})$ is the Euclidean space with the canonical metric. 
\end{example}
%
%

Note that if we fix a point $x_{_0} \in (\B_r^{3},\bar{g})$, then the straight segment connecting the origin $\vec{0}$ to $x_{_0}$ is a geodesic with respect to the metric $\bar{g}$. If $r$ is the canonical distance of a point $x\in\B_a^{3}$ to the origin, then the distance $\bar{r}$ of $x$ to $\vec{0}$ in $(\B_a^{3},\bar{g})$ is the length of the straight segment $\gamma$ which connects $\vec{0}$ to $x$ with respect to the metric $\bar{g}$. If $\gamma(t)=tx$ is a parametrization of $\gamma$, then $\gamma'(t)=\vec{x}$. Thus, 
\begin{equation}\label{eqdistconf}
\bar{r}=\int_0^1\sqrt{\bar{g}(\vec{x},\vec{x})}dt=\int_0^1 \left|x\right|e^{u(\left|tx\right|^2)}dt= r \int_0^1 e^{u(t^2r^2)}dt.
\end{equation}

If $(\B_1^3,\bar{g})$ is the hyperbolic space $\mathbb{H}^3$, then the hyperbolic distance $\bar{r}_{_\mathbb{H}}$ of a point $x\in\mathbb{H}^3$ to the origin is given by 
$$
\bar{r}_{_\mathbb{H}}=r \int_0^1 \frac{2}{1-t^2r^2}dt = 2\tanh^{-1}(r),
$$
where $r=\left|x\right|$ is the euclidean distance. Similarly, if $(\R^3,\bar{g})$ is the $\S^3\setminus \{p\}$, then the spherical distance $\bar{r}_{_\mathbb{S}}$ of a point $x\in\S^3\setminus \{p\}$ at antipodal point of $p$ is given by
$$
\bar{r}_{_\mathbb{S}}=r \int_0^1 \frac{2}{1+t^2r^2}dt = 2\tan^{-1}(r)\,.
$$

Let $\overline{\nabla}$ and $\nabla$ be the Levi-Civita connections of 
$(\mathbb{B}_r^{3},\bar{g})$ and $(\mathbb{B}_r^{3},\left\langle , \right\rangle)$, respectively. Using the Kozul formula, we can obtain that
\begin{equation}\label{conconf}
\overline{\nabla}_YX=\nabla_YX+ Y(h)X+X(h)Y - \left\langle X,Y\right\rangle\nabla h\,
\end{equation}
for any vector fields $X,Y \in \mathcal{X}(\B_r^3)$. In this setting, we have the following result:

\begin{lemma}\label{conexaoconf} Let $(\B_r^{3},\bar{g})$ be the Euclidean ball with a metric $\bar{g}=e^{2h}\left\langle , \right\rangle$ defined as before. 
Then, we have  

\begin{enumerate}[i)]

\item $\overline{\nabla}_Y \vec{x} = \sigma Y $, where $\sigma=1+2u'(\left|\vec{x}\right|^2)\left|\vec{x}\right|^2$

\item $\overline{\nabla} \sigma= 4e^{-2h}(u''(\left|x\right|^2)\left|x\right|^2 + u'(\left|x\right|^2))\vec{x}$,
\end{enumerate}

\noindent where $\overline{\nabla} \sigma$ denotes the gradient of the potential function $\sigma$.
\end{lemma}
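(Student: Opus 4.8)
The plan is to derive both identities directly from the conformal-change formula \eqref{conconf}, specialized to the position vector field $\vec{x}$, together with the elementary Euclidean fact that $\nabla_Y\vec{x}=Y$ for every $Y\in\mathcal{X}(\B_r^3)$ (which holds since $\nabla_{\partial_j}\sum_i x_i\partial_i=\partial_j$), and the gradient formula $\mbox{\texttt{grad}}(h)=2u'(\left|x\right|^2)\vec{x}$ recorded in \eqref{gradh}.

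For part i), I would set $X=\vec{x}$ in \eqref{conconf}, obtaining
$$\overline{\nabla}_Y\vec{x}=\nabla_Y\vec{x}+Y(h)\vec{x}+\vec{x}(h)Y-\left\langle\vec{x},Y\right\rangle\nabla h.$$
Using $\nabla_Y\vec{x}=Y$ and $\nabla h=2u'(\left|x\right|^2)\vec{x}$, the two terms $Y(h)\vec{x}=2u'(\left|x\right|^2)\left\langle\vec{x},Y\right\rangle\vec{x}$ and $\left\langle\vec{x},Y\right\rangle\nabla h=2u'(\left|x\right|^2)\left\langle\vec{x},Y\right\rangle\vec{x}$ cancel exactly, while $\vec{x}(h)=\left\langle\nabla h,\vec{x}\right\rangle=2u'(\left|x\right|^2)\left|\vec{x}\right|^2$. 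Collecting what survives gives $\overline{\nabla}_Y\vec{x}=\bigl(1+2u'(\left|x\right|^2)\left|\vec{x}\right|^2\bigr)Y=\sigma Y$, as claimed; this is just the conformal-field property \eqref{xconf} written pointwise.

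For part ii), the key observation is that gradients with respect to $\bar{g}=e^{2h}\left\langle,\right\rangle$ and with respect to the Euclidean metric are related by $\overline{\nabla}f=e^{-2h}\mbox{\texttt{grad}}(f)$, which follows from $\bar{g}(\overline{\nabla}f,Y)=Y(f)=\left\langle\mbox{\texttt{grad}}(f),Y\right\rangle$ together with $\bar{g}(\cdot,\cdot)=e^{2h}\left\langle\cdot,\cdot\right\rangle$. It then suffices to compute the Euclidean gradient of $\sigma$. Writing $t=\left|x\right|^2$ so that $\sigma=1+2u'(t)t$, the chain rule gives $\tfrac{d\sigma}{dt}=2\bigl(u''(t)t+u'(t)\bigr)$, and since $\mbox{\texttt{grad}}(t)=2\vec{x}$ we obtain $\mbox{\texttt{grad}}(\sigma)=\tfrac{d\sigma}{dt}\,\mbox{\texttt{grad}}(t)=4\bigl(u''(\left|x\right|^2)\left|x\right|^2+u'(\left|x\right|^2)\bigr)\vec{x}$. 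Multiplying by the conformal factor $e^{-2h}$ yields the stated expression for $\overline{\nabla}\sigma$.

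The computation is essentially routine; the only points requiring care are the bookkeeping of the chain-rule derivatives of $\sigma$ in the variable $t=\left|x\right|^2$ and, more importantly, remembering to rescale the Euclidean gradient by $e^{-2h}$ when passing to $\overline{\nabla}$ in part ii). I do not anticipate a genuine obstacle.
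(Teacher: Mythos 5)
Your proposal is correct and follows essentially the same route as the paper: part i) specializes the conformal connection formula \eqref{conconf} to $X=\vec{x}$ and cancels the $Y(h)\vec{x}$ and $\left\langle\vec{x},Y\right\rangle\nabla h$ terms, and part ii) rescales the Euclidean gradient of $\sigma$ by $e^{-2h}$. The only cosmetic difference is that you compute $\mbox{\texttt{grad}}(\sigma)$ by the chain rule in $t=\left|x\right|^2$ whereas the paper differentiates componentwise; the result is identical.
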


\begin{proof} $i)$ Using the expression of $\overline{\nabla}$ given by $(\ref{conconf})$ we have
\begin{eqnarray*} 
\overline{\nabla}_Y\vec{x}&=& \nabla_Y\vec{x} + Y(h)\vec{x} + \vec{x}(h)Y - \left\langle \vec{x},Y \right\rangle \nabla h \\
                     &=& Y+\left\langle \nabla h,\vec{x}\right\rangle Y+\left\langle \nabla h,Y\right\rangle \vec{x} - \left\langle \vec{x},Y\right\rangle \nabla h \\ 
										 &=& Y+2u'(\left|x\right|^2)\left\langle \vec{x},\vec{x}\right\rangle Y+2u'(\left|x\right|^2)\left\langle \vec{x},Y\right\rangle \vec{x} - 2u'(\left|\vec{x}\right|^2)\left\langle \vec{x},Y\right\rangle \vec{x} \\
										 &=& (1+2u'(\left|\vec{x}\right|^2)\left|\vec{x}\right|^2)Y\\
										 &=& \sigma Y.
\end{eqnarray*}
\noindent $ii)$ If \texttt{grad}$(\sigma)$ denote the gradient of the function $\sigma$ with respect to the metric $\left\langle ,\right\rangle$, then $\overline{\nabla} \sigma=e^{-2h}\mbox{\texttt{grad}}(\sigma)$. Calculating  \texttt{grad}$(\sigma)$, we obtain
\begin{eqnarray*}
\mbox{\texttt{grad}}(\sigma) &=& \sum_{i=1}^{3}{\frac{\partial}{\partial{x_i}}(1+2u'(\left|x\right|^2)\left|x\right|^2)\frac{\partial}{\partial{x_i}}}\\
							&=& \sum_{i=1}^{3}{4(u''(\left|x\right|^2)\left|x\right|^2 + u'(\left|x\right|^2))x_i\frac{\partial}{\partial{x_i}}}\\
							&=& 4(u''(\left|x\right|^2)\left|x\right|^2 + u'(\left|x\right|^2))\vec{x}.
\end{eqnarray*}
Thus, 
\begin{equation*}
\overline{\nabla} \sigma= 4e^{-2h}(u''(\left|x\right|^2)\left|x\right|^2 + u'(\left|x\right|^2))\vec{x}. 
\end{equation*}
\end{proof}

The next proposition was inspired in \cite{HaXa} and has a subtle importance. In short, it allows simplify calculations in order to construct a function that becomes convex under a gap geometric hypothesis as will be clear in {Lemma \ref{authess}}. 

\begin{proposition}\label{lemasolfund} Let $a,b:[0,r_0) \rightarrow \R $ $(r_0 \leq \infty)$ be smooth functions such that:
\begin{enumerate}[i)]

\item $b(t)>0$ $\forall\,t\,\in [0,r_0);$

\item $a(0)=0$, $a(t)>0$, $\forall\,t\in(0,r_0);$

\item $a'(t)>0$, $\forall\,t\in(0,r_0)$. 
\end{enumerate}

\noindent Then, the differential equation 
\begin{equation}\label{eqdiffegeral}     
\Phi''(a)a'b + \Phi'(a)b'=0
\end{equation}
admits a solution $\Phi:(0,\tilde{r}_0) \longrightarrow \R $ that satisfies  $\Phi'(s)>0$, where $(0,\tilde{r}_0)$ is a image of the interval $(0,r_0)$ by the function $a$.
\end{proposition}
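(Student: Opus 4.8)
The plan is to recognize \eqref{eqdiffegeral} as an exact equation in disguise and integrate it directly. The key observation is that, viewing everything as a function of $t$, the left-hand side of \eqref{eqdiffegeral} is precisely the $t$-derivative of the product $b(t)\,\Phi'(a(t))$. Indeed, by the chain rule $\frac{d}{dt}\Phi'(a(t)) = \Phi''(a(t))\,a'(t)$, so that
\[
\frac{d}{dt}\Big(b(t)\,\Phi'(a(t))\Big) = b'(t)\,\Phi'(a(t)) + b(t)\,\Phi''(a(t))\,a'(t),
\]
which coincides exactly with the left-hand side of \eqref{eqdiffegeral}. Hence solving \eqref{eqdiffegeral} is equivalent to requiring that $b(t)\,\Phi'(a(t))$ be constant in $t$.

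Next I would fix any constant $C>0$ and impose $b(t)\,\Phi'(a(t)) = C$ on $(0,r_0)$, that is, $\Phi'(a(t)) = C/b(t)$. To read this off as a condition on $\Phi'$ as a genuine function of $s=a(t)$, I invert $a$. By hypothesis $(iii)$ we have $a'(t)>0$ throughout $(0,r_0)$, so $a$ is strictly increasing there and, by the inverse function theorem, is a smooth diffeomorphism onto its image with smooth inverse $a^{-1}$. Combining $(ii)$ (namely $a(0)=0$ and $a>0$) with monotonicity and continuity shows that this image is exactly the open interval $(0,\tilde{r}_0)$, where $\tilde{r}_0=\lim_{t\to r_0}a(t)$. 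Thus for $s\in(0,\tilde{r}_0)$ I may define
\[
\Phi'(s) := \frac{C}{b\big(a^{-1}(s)\big)}.
\]
Since $b>0$ by $(i)$ and $C>0$, this is positive, giving $\Phi'(s)>0$ as required.

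Finally I would recover $\Phi$ itself by integrating. Fixing a base point $s_\ast\in(0,\tilde{r}_0)$, set
\[
\Phi(s) := \int_{s_\ast}^{s}\frac{C}{b\big(a^{-1}(\tau)\big)}\,d\tau, \qquad s\in(0,\tilde{r}_0).
\]
Smoothness of $b$ together with smoothness of $a^{-1}$ makes the integrand smooth and strictly positive, so $\Phi$ is smooth with $\Phi'>0$, and by the derivation above it solves \eqref{eqdiffegeral}. There is no serious obstacle here: the whole argument reduces to the exact-derivative identity of the first paragraph, and the only points needing care are the two routine structural facts about $a$ — that $(iii)$ makes $a$ invertible with smooth inverse on $(0,r_0)$, and that $(ii)$ pins down the image as $(0,\tilde{r}_0)$. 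The sign hypotheses $(i)$ and $(iii)$ do all the work, simultaneously guaranteeing the positivity $\Phi'>0$ and the admissibility of the change of variables $s=a(t)$.
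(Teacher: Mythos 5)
Your proposal is correct and follows essentially the same route as the paper: both recognize that, with $f(t)=\Phi'(a(t))$, the equation reads $(b\,f)'=0$, solve for $\Phi'(a(t))=C/b(t)$, invert $a$ using hypotheses (ii)--(iii), and recover $\Phi$ by integration, with positivity of $\Phi'$ coming from (i). The only cosmetic difference is your choice of an interior base point $s_\ast$ for the integral where the paper integrates from $0$ and normalizes $\Phi(0)=0$, $C=1$.
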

\begin{proof} We define $f(t):=\Phi'(a(t))$. Then $f'(t)=\Phi''(a(t))a'(t)$. Thus, the equation $(\ref{eqdiffegeral})$ is equivalent to 
$$
f'(t)b(t) + f(t)b'(t)=0.
$$
Solving the above equation on $t$, we obtain
$$
f(t)=\frac{c_1}{b(t)}, 
$$
namely, 
$$
\Phi'\big(a(t)\big)=\frac{c_1}{b(t)},
$$
where $c_1>0.$
Multiplying both sides of the above equation by $a'(t)$, and integrating with respect to $t$, we obtain
$$
\int_0^r {\Phi'\big(a(t)\big)a'(t)}dt=c_1 \int_0^r \frac{a'(t)}{b(t)}dt. 
$$
As $a'(t)>0$ for $0 < t < r_0 $, the function $a:[0,r_0) \rightarrow [0,\tilde{r}_0)$ is a diffeomorphism in $(0,r_0)$. Thus, we can consider the change of variable $a(t)=\xi$. Therefore, $d\xi=a'(t)dt$, which results in
$$ 
\int_0^{s} {\Phi'(\xi)}d\xi=c_1 \int_0^{s} \frac{1}{b(a^{-1}(\xi))}d\xi,
$$
where $s=a(r) \in [0,\tilde{r}_0)$. By the Fundamental Theorem of Calculus,
$$
\Phi(s)-\Phi(0)=c_1 \int_0^{s} \frac{1}{b(a^{-1}(\xi))}d\xi.
$$
Choosing $\Phi(0)=0$ and $c_1=1$, the function
$$
\Phi(s)= \int_0^{s} \frac{1}{b(a^{-1}(\xi))}d\xi
$$
is a solution to the equation $(\ref{eqdiffegeral})$ which satisfies $\Phi'(s)>0,\,\,\forall\,s \in [0,\tilde{b})$,  as desired. 
 
\end{proof}
In the next result we calculated the Hessian of the function $\varphi:\Sigma \rightarrow \R$ given by $\varphi(x)=\bar{g}(\vec{x},\vec{x}).$

\begin{lemma}\label{henssiano} Let $\Sigma$ be a smooth surface in $(\B_r^{3},\bar{g})$. Define the function $\varphi:\Sigma \rightarrow \R$ by $\varphi(x)=\bar{g}(\vec{x},\vec{x})$. Then, 
$$
Hess_{\Sigma}\varphi(x)(Y,Z)=2 \left( \bar{g}(\overline{\nabla}\sigma,Y)\bar{g}(\vec{x},Z)  + \sigma^2\bar{g}(Z,Y)+ \sigma\bar{g}(A(Y),Z)\bar{g}(N,\vec{x})\right),
$$ 
\noindent where $\overline{\nabla} \sigma$ denote the gradient of the potential function of the conformal vector field $\vec{x}$ with respect to $\bar{g},$ $N$ denotes the unit normal vector field at the point $x\in\Sigma$ and $A$ is the operator form on $\Sigma.$
\end{lemma}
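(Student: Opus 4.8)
The plan is to compute $\varphi$ first as a function on the ambient ball $(\B_r^3,\bar g)$ and then to transfer its Hessian to the surface by the Gauss formula, so that the only genuinely geometric contribution comes from the second fundamental form of $\Sigma$.

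First I would compute the ambient gradient of $\varphi$. For any $W\in\mathcal{X}(\B_r^3)$, differentiating $\varphi=\bar g(\vec x,\vec x)$ and using item $i)$ of Lemma \ref{conexaoconf} (that is, $\overline\nabla_W\vec x=\sigma W$) gives
$$W(\varphi)=2\,\bar g(\overline\nabla_W\vec x,\vec x)=2\sigma\,\bar g(W,\vec x),$$
so that the ambient gradient is $\overline\nabla\varphi=2\sigma\vec x$. Differentiating once more, for tangent fields $Y,Z$ on $\Sigma$ I would obtain the ambient Hessian
$$Hess_{M}\varphi(Y,Z)=\bar g\big(\overline\nabla_Y(2\sigma\vec x),Z\big)=2\,Y(\sigma)\,\bar g(\vec x,Z)+2\sigma\,\bar g(\overline\nabla_Y\vec x,Z),$$
and applying $\overline\nabla_Y\vec x=\sigma Y$ together with $Y(\sigma)=\bar g(\overline\nabla\sigma,Y)$ this collapses to
$$Hess_{M}\varphi(Y,Z)=2\,\bar g(\overline\nabla\sigma,Y)\,\bar g(\vec x,Z)+2\sigma^2\,\bar g(Y,Z).$$

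Next I would relate the intrinsic Hessian on $\Sigma$ to the ambient one. Since $Y,Z$ are tangent to $\Sigma$ and $\varphi|_\Sigma$ agrees with the ambient $\varphi$ along $\Sigma$, the first derivatives $Z\varphi$ coincide in both settings, and the two Hessians differ only through the Gauss decomposition $\overline\nabla_Y Z=\nabla^{\Sigma}_Y Z+\bar g(A(Y),Z)\,N$. Subtracting the defining expressions for the two Hessians this yields
$$Hess_{\Sigma}\varphi(Y,Z)=Hess_{M}\varphi(Y,Z)+\bar g(A(Y),Z)\,N(\varphi).$$
Using $N(\varphi)=\bar g(\overline\nabla\varphi,N)=2\sigma\,\bar g(\vec x,N)$ and substituting the ambient Hessian computed above gives exactly
$$Hess_{\Sigma}\varphi(Y,Z)=2\Big(\bar g(\overline\nabla\sigma,Y)\,\bar g(\vec x,Z)+\sigma^2\,\bar g(Z,Y)+\sigma\,\bar g(A(Y),Z)\,\bar g(N,\vec x)\Big),$$
as claimed.

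The computation is routine once the two auxiliary facts from Lemma \ref{conexaoconf} are in hand, so no single step is a real obstacle. The main thing requiring care is the bookkeeping in the Gauss formula: one must check that the normal part of $\overline\nabla_Y Z$ is measured by $\bar g(A(Y),Z)$ consistently with the paper's convention $II(X,Y)=g_\Sigma(A(X),Y)$, so that the second fundamental form term appears with the correct sign. It is also essential to keep the ambient connection $\overline\nabla$ (and not the Euclidean $\nabla$) throughout, since $\vec x$ is conformal with potential $\sigma$ only with respect to $\bar g$.
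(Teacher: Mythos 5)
Your proof is correct and is in substance the same computation as the paper's: both hinge on $\overline{\nabla}_Y\vec{x}=\sigma Y$ from Lemma \ref{conexaoconf} and on the Gauss decomposition $\overline{\nabla}_YZ-\nabla_YZ=\bar{g}(A(Y),Z)N$, which produces the $\sigma\bar{g}(A(Y),Z)\bar{g}(N,\vec{x})$ term. The only difference is organizational — you first compute the ambient gradient $\overline{\nabla}\varphi=2\sigma\vec{x}$ and ambient Hessian and then add the second-fundamental-form correction, whereas the paper expands $YZ(\varphi)-(\nabla_YZ)(\varphi)$ directly in one chain — so nothing of substance is gained or lost.
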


\begin{proof} Let $\nabla$ be the connection of $\Sigma$. Using the Lemma \ref{conexaoconf}, we have that $Hess_{\Sigma}\varphi$ is given by 
\begin{eqnarray*}
Hess_{\Sigma}\varphi(x)(Y,Z) &=& YZ(\varphi) - (\nabla_YZ)(\varphi)\\
														 &=& YZ\bar{g}(\vec{x},\vec{x}) - (\nabla_YZ)\bar{g}(\vec{x},\vec{x})	\\
														 &=& 2 \left(Y\bar{g}(\overline{\nabla}_Z\vec{x},\vec{x}) - \bar{g}(\overline{\nabla}_{_{\nabla_{_{Y}}Z}}\vec{x},\vec{x})\right)\\
														 &=& 2 \left(Y\bar{g}(\sigma Z,\vec{x}) - \bar{g}(\sigma \nabla_YZ,\vec{x})\right)\\
														 &=& 2 \left( Y(\sigma)\bar{g}(Z,\vec{x})+ \sigma \bar{g}(\overline{\nabla}_YZ,\vec{x}) + \sigma^2\bar{g}(Z,Y)- \sigma \bar{g}(\nabla_YZ,\vec{x})  \right) \\
														&=& 2 \left( \bar{g}(\overline{\nabla} \sigma, Y)\bar{g}(Z,\vec{x})+ \sigma^2\bar{g}(Z,Y)+ \sigma \bar{g}(\overline{\nabla}_YZ-\nabla_YZ ,\vec{x})\right) \\
														&=& 2 \left( \bar{g}(\overline{\nabla} \sigma, Y)\bar{g}(Z,\vec{x})+ \sigma^2\bar{g}(Z,Y)+ \sigma\bar{g}(A(Y),Z)\bar{g}(N,\vec{x})\right)
\end{eqnarray*}
as desired.
\end{proof}

\begin{lemma}\label{authess} Let $\Sigma$ be a surface in $(\B^3_r,\bar{g}).$ Let $\Psi:\Sigma\rightarrow\R$ be a function defined by  
\begin{equation}\label{funcconvex}
\Psi(x)=\Phi(\varphi(x)),
\end{equation}
where $\varphi(x)=\bar{g}(\vec{x},\vec{x})=e^{2u(\left|x\right|^2)}\left|\vec{x}\right|^2$ and $\Phi$ is a solution of the equation $(\ref{eqdiffegeral})$ given by Proposition \ref{lemasolfund}. Then, the eigenvalues of $Hess_{\Sigma}\Psi(x)$ are given by
$$\overline{\lambda}_1=2\sigma^2\Phi'\left(1+\dfrac{\overline{k}_1}{\sigma}\overline{g}(\vec{x},N)\right)\ \text{and}\  \overline{\lambda}_2=2\sigma^2\Phi'\left(1+\dfrac{\overline{k}_2}{\sigma}\bar{g}(\vec{x},N)\right),$$
here $\overline{k}_1$ and $\overline{k_2}$ are the principal curvatures of $\Sigma$ with respect to the normal vector $N.$
\begin{proof} It follows from the Lemma \ref{conexaoconf} that $\overline{\nabla}_Y\vec{x}=\sigma Y$, where $\sigma = 1+2u'(\left|x\right|^2)\left|\vec{x}\right|^2$. We choose 
$$a(t)=e^{2u(t)}t\,\, \mbox{,}\,\, b(t)=1+2u'(t)t$$
and consider a solution $\Phi$ of the equation $(\ref{eqdiffegeral})$ given by Proposition \ref{lemasolfund}:
\begin{equation}\label{eqfundam001}
\Phi''(e^{2u(t)}t)e^{2u(t)}(1+2u'(t)t)^2 + 2\Phi'(e^{2u(t)}t)(u''(t)t+u'(t))=0.
\end{equation}
Define $\Psi:\Sigma\rightarrow\R$ by  
\begin{equation}\label{funcconvex}
\Psi(x)=\Phi(\varphi(x)),
\end{equation}
where $\varphi(x)=\bar{g}(\vec{x},\vec{x})=e^{2u(\left|x\right|^2)}\left|\vec{x}\right|^2$. Thus,
\begin{eqnarray}\label{henscompo}
Hess_{\Sigma}\Psi(x)(Y,Z) &=& YZ(\Phi(\varphi)) - \nabla_YZ(\Phi(\varphi)) \nonumber\\
                          &=& Y(\Phi'(\varphi)Z(\varphi)) - \Phi'(\varphi)\nabla_YZ(\varphi) \nonumber \\
                          &=& Y(\Phi'(\varphi))Z(\varphi) + \Phi'(\varphi)YZ(\varphi) - \Phi'(\varphi) \nabla_YZ(\varphi) \nonumber \\
													&=& \Phi''(\varphi)Y(\varphi)Z(\varphi) + \Phi'(\varphi)Hess_{\Sigma} \varphi(x)(Y,Z) \nonumber \\
													&=& 4\sigma^2\Phi''(\varphi)\bar{g}(Y,\vec{x})\bar{g}(Z,\vec{x}) + \Phi'(\varphi)Hess_{\Sigma} \varphi(x)(Y,Z).\nonumber 
\end{eqnarray}
Using the expression of $Hess_{\Sigma} \varphi(x)$ obtained in the previous lemma, we have
\begin{eqnarray}\label{hesscomposto}
Hess_{\Sigma}\Psi(x)(Y,Z) &=& 4\sigma^2\Phi''(\varphi)\bar{g}(Y,\vec{x})\bar{g}(Z,\vec{x})\nonumber \\
                          & & + 2\Phi'(\varphi)\left(\bar{g}(\bar{\nabla} \sigma, Y)\bar{g}(Z,\vec{x})+ \sigma^2\bar{g}(Z,Y)+ \sigma\bar{g}(A(Y),Z)\bar{g}(N,\vec{x})\right)\nonumber \\
                          &=& \left\{4\sigma^2\Phi''(\varphi) + 2\Phi'(\varphi)G(x)\right\}\bar{g}(Y,\vec{x})\bar{g}(Z,\vec{x}) \nonumber \\
													& & + 2\sigma^2\Phi'(\varphi)\left(\bar{g}(Z,Y)+ \frac{1}{\sigma} \bar{g}(A(Y),Z)\bar{g}(N,\vec{x})\right), 
\end{eqnarray} 
where $G(x)=4e^{-2u(\left|x\right|^2)}(u''(\left|\vec{x}\right|^2)\left|\vec{x}\right|^2 + u'(\left|\vec{x}\right|^2)$. Therefore, by $(\ref{eqfundam001})$, we obtain 
$$
\left\{4\sigma^2\Phi''(\varphi) + 2\Phi'(\varphi)G(x)\right\} \equiv 0.
$$
Thus, (\ref{hesscomposto}) provides:
\begin{eqnarray*}
Hess_{\Sigma}\Psi(x)(Y,Z) = 2\sigma^2\Phi'(\varphi)\mathcal{B}_0(Y,Z),
\end{eqnarray*}
where $\mathcal{B}_0(Y,Z):=\bar{g}(Z,Y)+ \frac{1}{\sigma} \bar{g}(A(Y),Z)\bar{g}(N,\vec{x})$. 

Let $\{E_1,E_2\}$ be a local frame that diagonalises the second fundamental form, i.e, $A(E_i)=\bar{k}_iE_i$, where $\bar{k}_i$ is a principal curvature. Notice that the eigenvectors of $A$ are also eigenvectors of the symmetric bilinear form $\mathcal{B}_0(\cdot,\cdot)$, thus the eigenvalues of $\mathcal{B}_0(\cdot,\cdot)$ are 
$$
\lambda_i=1 + \dfrac{\bar{k}_i}{\sigma}\bar{g}(\vec{x},N).
$$
\end{proof}
\end{lemma}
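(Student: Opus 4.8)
The plan is to compute $Hess_{\Sigma}\Psi$ directly from the composition $\Psi=\Phi\circ\varphi$, reduce everything to the operator form $A$ and the potential $\sigma$, and then exhibit a rank-one ``error term'' that the defining ODE for $\Phi$ is precisely engineered to annihilate. First I would record the first-order behaviour of $\varphi=\bar g(\vec x,\vec x)$: differentiating and using $\overline{\nabla}_Y\vec x=\sigma Y$ from Lemma \ref{conexaoconf} (i) gives $Y(\varphi)=2\sigma\,\bar g(Y,\vec x)$. Feeding this into the standard second-order chain rule
$$Hess_{\Sigma}\Psi(Y,Z)=\Phi''(\varphi)\,Y(\varphi)\,Z(\varphi)+\Phi'(\varphi)\,Hess_{\Sigma}\varphi(Y,Z)$$
produces a leading term $4\sigma^2\Phi''(\varphi)\,\bar g(Y,\vec x)\,\bar g(Z,\vec x)$, which is rank one, supported entirely on the $\vec x$-direction.

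Next I would substitute the expression for $Hess_{\Sigma}\varphi$ furnished by Lemma \ref{henssiano}, whose three summands are a term $\bar g(\overline{\nabla}\sigma,Y)\bar g(\vec x,Z)$, the metric term $\sigma^2\bar g(Z,Y)$, and the shape-operator term $\sigma\,\bar g(A(Y),Z)\bar g(N,\vec x)$. The key structural observation, and the point where Lemma \ref{conexaoconf} (ii) is essential, is that $\overline{\nabla}\sigma$ is a scalar multiple of $\vec x$, say $\overline{\nabla}\sigma=G\,\vec x$; consequently $\bar g(\overline{\nabla}\sigma,Y)=G\,\bar g(\vec x,Y)$ and this summand is \emph{also} rank one in the $\vec x$-direction. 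Collecting the two rank-one contributions leaves a single scalar coefficient $4\sigma^2\Phi''(\varphi)+2\Phi'(\varphi)G$ multiplying $\bar g(Y,\vec x)\bar g(Z,\vec x)$.

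The decisive step is to show that this coefficient vanishes identically. Writing $t=|x|^2$, one has $\varphi=e^{2u(t)}t$ and $\sigma=1+2u'(t)t$, so the natural choice $a(t)=e^{2u(t)}t$ and $b(t)=1+2u'(t)t$ satisfies hypotheses (i)--(iii) of Proposition \ref{lemasolfund} (after shrinking $r$ so that $\sigma>0$), and substituting $a,b$ into the ODE (\ref{eqdiffegeral}) reproduces exactly the relation $4\sigma^2\Phi''(\varphi)+2\Phi'(\varphi)G=0$ up to the positive factor $e^{-2u}$. Thus the whole rank-one error cancels, leaving
$$Hess_{\Sigma}\Psi(Y,Z)=2\sigma^2\Phi'(\varphi)\left(\bar g(Z,Y)+\frac{1}{\sigma}\,\bar g(A(Y),Z)\bar g(N,\vec x)\right).$$
Finally I would diagonalize by choosing a principal frame $\{E_1,E_2\}$ with $A(E_i)=\bar k_i E_i$: the bracketed bilinear form is then diagonal with entries $1+\frac{\bar k_i}{\sigma}\bar g(\vec x,N)$, whence the eigenvalues of $Hess_{\Sigma}\Psi$ are $2\sigma^2\Phi'\bigl(1+\frac{\bar k_i}{\sigma}\bar g(\vec x,N)\bigr)$, as asserted.

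The hard part, and the conceptual heart of the argument, is the recognition that $\overline{\nabla}\sigma$ is parallel to $\vec x$: this is what forces both second-order ``bad'' terms (the one from $\Phi''$ and the one from $\overline{\nabla}\sigma$) to lie in the same one-dimensional subspace, so that a single scalar ODE for $\Phi$ can kill them simultaneously. The remaining subtlety is the bookkeeping needed to verify that the specific pairing $a(t)=e^{2u(t)}t$, $b(t)=1+2u'(t)t$ matches the general equation of Proposition \ref{lemasolfund} to the cancellation condition on the nose; once that alignment is confirmed, the eigenvalue computation is immediate.
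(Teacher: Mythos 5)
Your proposal is correct and follows essentially the same route as the paper: the chain rule for $\Psi=\Phi\circ\varphi$ combined with Lemma \ref{henssiano}, the observation that $\overline{\nabla}\sigma$ is proportional to $\vec{x}$ so both second-order contributions are rank one along $\vec{x}$, cancellation of that term via the ODE of Proposition \ref{lemasolfund} with $a(t)=e^{2u(t)}t$ and $b(t)=1+2u'(t)t$, and diagonalization in a principal frame. Your check that $a'b=e^{2u}b^2$ and $b'=2(u''t+u')$ reproduce the cancellation condition up to the positive factor $4e^{-2u}$ is exactly the computation the paper records as equation (\ref{eqfundam001}).
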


\begin{proposition} \label{hesnong}
 Under the same hypothesis in Theorem \ref{classifcmc}, we have
 $$
Hess_{\Sigma}\Psi(p)(Y,Y)\geq 0,\,\, \forall \,\, Y \in T_p\Sigma \,\, and \,\, p \in \Sigma.
$$
\end{proposition}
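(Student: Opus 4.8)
The plan is to reduce the claimed nonnegativity of the Hessian to the two eigenvalue formulas already established in Lemma \ref{authess}, and then to show that the pinching condition \eqref{ccmc1} forces each eigenvalue to be nonnegative. Since the symmetric bilinear form $Hess_{\Sigma}\Psi(p)$ is diagonalized in the frame of principal directions exhibited in Lemma \ref{authess}, proving $Hess_{\Sigma}\Psi(p)(Y,Y)\geq 0$ for every $Y\in T_p\Sigma$ is equivalent to proving that both eigenvalues $\overline{\lambda}_1,\overline{\lambda}_2$ are nonnegative. Recalling from Lemma \ref{authess} that
$$
\overline{\lambda}_i = 2\sigma^2\Phi'\left(1 + \frac{\overline{k}_i}{\sigma}\bar{g}(\vec{x},N)\right), \qquad i=1,2,
$$
and that $\sigma>0$ by our choice of $r$ while $\Phi'>0$ by Proposition \ref{lemasolfund}, the factor $2\sigma^2\Phi'$ is strictly positive. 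Hence the sign of $\overline{\lambda}_i$ coincides with the sign of $1+\frac{\overline{k}_i}{\sigma}\bar{g}(\vec{x},N)$, and the whole problem reduces to the scalar inequalities $1+\frac{\overline{k}_i}{\sigma}\bar{g}(\vec{x},N)\geq 0$ for $i=1,2$.

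Next I would translate the pinching hypothesis \eqref{ccmc1} into a statement about the principal curvatures. Since $\Phi=II-\frac{H}{2}g_{\Sigma}$ is the traceless part of the second fundamental form, one has $H=\overline{k}_1+\overline{k}_2$ and $|\Phi|^2=\frac{1}{2}(\overline{k}_1-\overline{k}_2)^2$. Writing $t:=\frac{\bar{g}(\vec{x},N)}{\sigma}$, the first line of \eqref{ccmc1} becomes $(\overline{k}_1-\overline{k}_2)^2t^2\leq (2+Ht)^2$, while the second line of \eqref{ccmc1} guarantees that $2+Ht$ is nonnegative. Taking nonnegative square roots is therefore legitimate and yields the single inequality $|\overline{k}_1-\overline{k}_2|\,|t|\leq 2+Ht$, equivalently the two-sided bound
$$
-(2+Ht)\leq (\overline{k}_1-\overline{k}_2)t\leq 2+Ht.
$$

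Finally I would unwind these inequalities. Using $\overline{k}_1=\frac{1}{2}\big(H+(\overline{k}_1-\overline{k}_2)\big)$ and $\overline{k}_2=\frac{1}{2}\big(H-(\overline{k}_1-\overline{k}_2)\big)$, one computes
$$
1+\overline{k}_1 t=\tfrac{1}{2}\big(2+Ht+(\overline{k}_1-\overline{k}_2)t\big),\qquad 1+\overline{k}_2 t=\tfrac{1}{2}\big(2+Ht-(\overline{k}_1-\overline{k}_2)t\big).
$$
The left half of the two-sided bound makes the first of these nonnegative and the right half makes the second nonnegative, which is exactly $1+\frac{\overline{k}_i}{\sigma}\bar{g}(\vec{x},N)\geq 0$ for $i=1,2$. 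Combined with the positivity of $2\sigma^2\Phi'$, this gives $\overline{\lambda}_1,\overline{\lambda}_2\geq 0$ and hence $Hess_{\Sigma}\Psi(p)(Y,Y)\geq 0$ for all $Y$ and all $p$. I do not anticipate a genuine obstacle: the argument is essentially algebraic once Lemma \ref{authess} is in hand. The one point requiring care is the bookkeeping of the curvature convention, namely that $H$ is the sum rather than the average of the principal curvatures, since it is precisely this convention that makes $|\Phi|^2=\frac{1}{2}(\overline{k}_1-\overline{k}_2)^2$ and thereby aligns the pinching condition with the two eigenvalue inequalities. The role of the auxiliary hypothesis $0\leq 2+\frac{H}{\sigma}\bar{g}(\vec{x},N)$ is equally worth emphasizing: without it the square-root step would only control $|\overline{k}_1-\overline{k}_2|\,|t|$ against $|2+Ht|$ and could not pin down the correct sign.
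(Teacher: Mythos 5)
Your proposal is correct and follows essentially the same route as the paper: both reduce the claim, via Lemma \ref{authess} and the positivity of $2\sigma^2\Phi'$, to showing $1+\frac{\overline{k}_i}{\sigma}\bar{g}(\vec{x},N)\geq 0$ for $i=1,2$, and both use the first line of \eqref{ccmc1} together with the sign condition $2+\frac{H}{\sigma}\bar{g}(\vec{x},N)\geq 0$ to conclude. The only (immaterial) difference is that the paper argues via the product $\lambda_1\lambda_2\geq 0$ and the sum $\lambda_1+\lambda_2\geq 0$, whereas you expand each eigenvalue as half of $2+Ht\pm(\overline{k}_1-\overline{k}_2)t$ and invoke the two-sided bound obtained by taking square roots.
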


\begin{proof}
 To prove that $Hess_{\Sigma}\Psi(p)(Y,Y)\geq 0$ is equivalent to show that $\overline{\lambda}_1$ and $\overline{\lambda}_2$ are nonnegative, and since $\Phi'(\varphi)>0$ and $\sigma>0,$ then $2\sigma^2\Phi'(\varphi)>0.$ So, this is equivalent to prove that $\lambda_1$ and $\lambda_2$ are nonnegative too. In fact,
 \begin{eqnarray}
  {\lambda_1}{\lambda_2}&=&\left(1+\dfrac{\overline{k}_1}{\sigma}\overline{g}(\vec{x},N)\right)\left(1+\dfrac{\overline{k}_2}{\sigma}\overline{g}(\vec{x},N)\right)\\
  &=&1+\dfrac{\overline{H}}{\sigma}\overline{g}(\vec{x},N)+\dfrac{\overline{k}_1\overline{k}_2}{\sigma^2}\overline{g}(\vec{x},N)^2\nonumber\\
  &=&1+\dfrac{\overline{H}}{\sigma}\overline{g}(\vec{x},N)+\dfrac{\overline{H}^2-|A|^2}{2\sigma^2}\overline{g}(\vec{x},N)^2\nonumber\\
  &=&1+\dfrac{\overline{H}}{\sigma}\overline{g}(\vec{x},N)-\dfrac{|\Phi|^2}{2\sigma^2}\overline{g}(\vec{x},N)^2+\dfrac{\overline{H}^2}{(2\sigma)^2}\overline{g}(\vec{x},N)^2\nonumber\\
  &=&\dfrac{1}{4}\left(2+\dfrac{\overline{H}}{\sigma}\overline{g}(\vec{x},N)\right)^2-\dfrac{|\Phi|^2}{2\sigma^2}\overline{g}(\vec{x},N)^2.\nonumber
 \end{eqnarray}
Now, using the first inequality in (\ref{ccmc1}), we obtain that $\lambda_1\lambda_2\geq0$, is that, $\lambda_1$ and $\lambda_2$ have the same sign. Using the second inequality in Theorem \ref{classifcmc} we have $$\lambda_1+\lambda_2=2+\dfrac{\overline{H}}{\sigma}\overline{g}(\vec{x},N)\geq 0.$$ Therefore, $\lambda_{i}\geq 0,\ i\in\{1,2\}.$
\end{proof}

\begin{lemma}\label{convex}
 Under the same hypothesis in Theorem \ref{classifcmc}, we have
 \begin{enumerate}[i)]
  \item The geodesic curvature of $\partial \Sigma$ is $\overline{k}_{ge}=\dfrac{\sigma}{e^{u(r^2)}}.$ In particular, as  $\sigma>0$ in $\B^3_r,$ $\partial\Sigma$ is strictly convex.
  \item The set 
  \begin{equation}\label{conjC}
\mathcal{C}=\left\{p\in\Sigma; \Psi(p)=min_\Sigma\Psi(x)\right\},
\end{equation}
is totally convex, i.e. any geodesic arc $\gamma$ joining two points in $\mathcal{C}$ is entirely contained in $\mathcal{C}$. 
 \end{enumerate}
 
\begin{proof}
 i) The geodesic curvature $k_{g_e}$ of $\partial \Sigma$ with respect to the immersion $\Sigma$ in the ball $(\B_r^3,\left\langle\,,\,\right\rangle)$ is given by $k_{g_e}=\frac{1}{r}$. Then, by Lemma 10.1.1 in \cite{lopez2013constant}, we obtain that the geodesic curvature in the metric $\bar{g}$ is 
\begin{equation}\label{curvgeomudconf0}
\bar{k}_{g_e}=e^{-h}\left(k_{g_e} - \nu(h) \right)\,,
\end{equation}
where $\nu$ denotes the inner unit normal vector of $\partial \Sigma$ with respect to the metric $\left\langle\,,\,\right\rangle$. We prove here only by completeness. In fact, let $\bar{v}$ and $\bar{\nu}$ be  a unit vector field over $\partial \Sigma$ with respect to metric $\bar{g}$ in such way that $\bar{v}$ is tangent to $\partial \Sigma$ and $\bar{\nu}$ is inner normal to $\partial \Sigma$. We have,
\begin{eqnarray}\label{curvgeomudconf}
\bar{k}_{g_e}&=&\bar{g}(\bar{\nabla}_{\bar{v}}\bar{v},\bar{\nu} ) = \bar{g}( \nabla_{\bar{v}}\bar{v} + 2\bar{v}(h)\bar{v} - \left\langle \bar{v},\bar{v} \right\rangle \texttt{grad}(h),e^{-h}\nu ) \nonumber \\
             &=& \bar{g}( \nabla_{\bar{v}}\bar{v} - \left\langle \bar{v},\bar{v} \right\rangle \texttt{grad}(h),e^{-h}\nu )\nonumber \\
						 &=& \bar{g}( \nabla_{e^{-h}v} e^{-h}v - \left\langle \bar{v},\bar{v} \right\rangle \texttt{grad}(h),e^{-h}\nu )\nonumber \\ 
						 &=& e^{-h}\left(e^{-2h}\bar{g}(\nabla_{v} v,\nu) - (e^{-2h} \bar{g}( \texttt{grad}(h),\nu )\right)\nonumber \\
						 &=& e^{-h}\left(\left\langle \nabla_{v} v,\nu\right\rangle -  \left\langle \texttt{grad}(h),\nu \right\rangle \right)\nonumber \\
						 &=& e^{-h}\left(k_{g_e} -  \left\langle \texttt{grad}(h),\nu \right\rangle \right)\nonumber \\
						 &=& e^{-h} \left( k_{g_e} - \nu(h) \right).
\end{eqnarray}
On the other hand,
$$
\nu(h)= \left\langle \texttt{grad}(h),\nu \right\rangle = 2u'(r^2)\left\langle \vec{x},\nu \right\rangle = 2u'(r^2)\left\langle -r\nu,\nu \right\rangle = - 2u'(r^2)r. 
$$
Therefore,
$$
\bar{k}_{g_e}=e^{-u(r^2)}\left(\frac{1}{r} + 2u'(r^2)r \right)=\frac{e^{-u(r^2)}}{r}\left(1 + 2u'(r^2)r^2 \right) = \frac{\sigma}{e^{u(r^2)}r}.
$$ 
As $\sigma>0$, we have $\bar{k}_{g_e}>0$.  

To prove ii) we consider $p_1,\ p_2 \in \mathcal{C}.$ Since $\partial\Sigma$ is strictly convex, there is a geodesic path $\gamma:[0,1]\to\Sigma$ with $\gamma(0)=p_1$ and $\gamma(1)=p_2.$ Now, we consider $h(t)=\Phi(\varphi(\gamma(t)))$, then by Proposition \ref{hesnong} we have $Hess_{\Sigma}\Phi\geq 0,$  and therefore $h''\geq 0.$ Since $h$ attains its minimum at $t=0$ and $t=1$, we get $h(t)\equiv\displaystyle\min_{x\in\Sigma}\Phi(\varphi(x))$ and therefore $\gamma([0,1])\subset \mathcal{C}.$
\end{proof}
\end{lemma}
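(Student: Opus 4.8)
The statement splits into two essentially independent parts, and my plan treats them separately.

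\textbf{Part i) (geodesic curvature of the boundary).} The plan is first to compute the geodesic curvature of $\partial\Sigma$ with respect to the flat metric $\langle\,,\,\rangle$, and then to transplant this to $\bar{g}$ via the conformal transformation law. Because $\Sigma$ meets $\partial\B^3_r$ orthogonally (the free boundary condition), the inner unit conormal $\nu$ of $\partial\Sigma$ in $\Sigma$ is exactly the inward radial direction $-\vec{x}/r$ along $\partial\Sigma$, where $|\vec{x}|=r$. Using $\nabla_v\vec{x}=v$ for the Euclidean connection, a one-line computation gives $k_{g_e}=\langle\nabla_v v,\nu\rangle=1/r$ for the unit tangent $v$ of $\partial\Sigma$. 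Next I would invoke the standard conformal change formula for the geodesic curvature of a boundary curve, $\bar{k}_{g_e}=e^{-h}\bigl(k_{g_e}-\nu(h)\bigr)$; if one prefers a self-contained argument, this identity follows directly from the connection transformation \eqref{conconf} by expanding $\bar{g}(\bar{\nabla}_{\bar{v}}\bar{v},\bar{\nu})$ with $\bar{v}=e^{-h}v$ and $\bar{\nu}=e^{-h}\nu$. Finally, using \eqref{gradh} I would evaluate $\nu(h)=\langle\texttt{grad}(h),\nu\rangle=2u'(r^2)\langle\vec{x},-\vec{x}/r\rangle=-2u'(r^2)r$, so that $\bar{k}_{g_e}=e^{-u(r^2)}\bigl(1/r+2u'(r^2)r\bigr)=\sigma/(e^{u(r^2)}r)$. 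Since $\sigma>0$ on $\B^3_r$ by hypothesis, $\bar{k}_{g_e}>0$ and $\partial\Sigma$ is strictly convex.

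\textbf{Part ii) (total convexity of $\mathcal{C}$).} Here the idea is to combine the strict convexity just proved with the fact that $\Psi$ is convex along geodesics. Given $p_1,p_2\in\mathcal{C}$, strict convexity of $\partial\Sigma$ guarantees the existence of a geodesic arc $\gamma:[0,1]\to\Sigma$ joining them and remaining inside $\Sigma$: a length-minimizing curve between the two points exists by compactness, and a strictly convex boundary prevents it from touching $\partial\Sigma$ in its interior, so it is an honest geodesic of $\Sigma$. I would then set $h(t)=\Psi(\gamma(t))$ and note that, since $\gamma$ is a geodesic, $h''(t)=Hess_\Sigma\Psi(\gamma'(t),\gamma'(t))\ge 0$ by Proposition \ref{hesnong}; hence $h$ is convex on $[0,1]$. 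As $h(0)=h(1)=\min_\Sigma\Psi$ and $h\ge\min_\Sigma\Psi$ everywhere, convexity forces $h\equiv\min_\Sigma\Psi$, i.e. $\gamma([0,1])\subset\mathcal{C}$.

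\textbf{Main obstacle.} The routine computations in part i) are not the difficulty; the subtle point is the existence in part ii) of a geodesic joining $p_1$ and $p_2$ that stays inside $\Sigma$. This is exactly where strict convexity of the boundary is indispensable: without it a minimizing path could run along or exit through $\partial\Sigma$ and fail to be a geodesic of $\Sigma$. I would therefore be careful to phrase the existence step as a consequence of convexity (a length-minimizer cannot be tangent to a strictly convex boundary from the inside), which is precisely what makes the convexity-of-$\Psi$ argument applicable and closes the proof.
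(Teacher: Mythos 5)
Your proposal is correct and follows essentially the same route as the paper: the same Euclidean computation $k_{g_e}=1/r$, the same conformal transformation law $\bar{k}_{g_e}=e^{-h}(k_{g_e}-\nu(h))$ with $\nu(h)=-2u'(r^2)r$ yielding $\bar{k}_{g_e}=\sigma/(e^{u(r^2)}r)$, and the same convexity-along-geodesics argument for part ii) via Proposition \ref{hesnong}. Your extra care in justifying why a minimizing geodesic between points of $\mathcal{C}$ stays in the interior of $\Sigma$ is a welcome refinement of a step the paper states without elaboration.
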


\begin{corollary}\label{pointsc}
 Under the same hypothesis in Theorem \ref{classifcmc}, we have
 \begin{enumerate}[i)]
  \item either $\mathcal{C}$ contains a single point $p\in\mathcal{C}$,
  \item or $\mathcal{C}$ contains more than one point and in this case, any points $p_1$ and $p_2$ $\in \Sigma$ can be connected by a minimizing geodesic $\gamma:\left[0,1\right]\rightarrow \Sigma $ such that $\gamma\left([0,1\right]) \subset \mathcal{C}$. Therefore, $\mathcal{C}$ is a connected set. 
\end{enumerate}

\end{corollary}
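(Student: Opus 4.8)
The plan is to reduce this to a short argument resting on the two structural facts already established: that $\Psi$ is convex along geodesics (Proposition \ref{hesnong}, i.e.\ $Hess_{\Sigma}\Psi\ge 0$) and that $\partial\Sigma$ is strictly convex (Lemma \ref{convex}(i)). The dichotomy itself is trivial, since any set either reduces to a single point or contains at least two; so the only real content is to show that in the second case $\mathcal{C}$ is connected. In effect the corollary just repackages the total convexity proved in Lemma \ref{convex}(ii), adding the observation that the connecting geodesics may be taken minimizing.

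First I would fix two distinct points $p_1,p_2\in\mathcal{C}$. Because $\partial\Sigma$ is strictly convex, the surface $(\Sigma,\bar{g})$ is geodesically convex: any two of its points are joined by a length-minimizing geodesic $\gamma:[0,1]\to\Sigma$ whose interior avoids $\partial\Sigma$, a minimizer cannot be tangent to a strictly convex boundary from the inside, so it stays interior except possibly at its endpoints. This is precisely the existence statement already invoked in the proof of Lemma \ref{convex}(ii).

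Next I would run the convexity argument along $\gamma$. Setting $h(t)=\Psi(\gamma(t))=\Phi(\varphi(\gamma(t)))$, Proposition \ref{hesnong} gives $h''(t)=Hess_{\Sigma}\Psi(\gamma'(t),\gamma'(t))\ge 0$, so $h$ is convex on $[0,1]$. Since $p_1,p_2\in\mathcal{C}$ we have $h(0)=h(1)=\min_{\Sigma}\Psi$, and a convex function on $[0,1]$ attaining its minimum at both endpoints is constant equal to that minimum throughout; hence $\gamma([0,1])\subset\mathcal{C}$, which is the total convexity of Lemma \ref{convex}(ii). As $p_1,p_2\in\mathcal{C}$ were arbitrary, every pair of points of $\mathcal{C}$ is joined by a path lying in $\mathcal{C}$, so $\mathcal{C}$ is path-connected and therefore connected. (Note that the minimum set is genuinely $\mathcal{C}$ and not all of $\Sigma$: in the rotational case $\varphi=\bar{g}(\vec{x},\vec{x})$ is non-constant, so the points $p_1,p_2$ in item (ii) must be read as lying in $\mathcal{C}$.)

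The main obstacle is exactly the geometric input of the second paragraph: guaranteeing that minimizing geodesics between interior points exist and remain inside $\Sigma$. On a compact manifold with boundary this is not automatic, a minimizer could a priori run along $\partial\Sigma$, and it is here that strict convexity of the boundary (rather than mere convexity) is essential, forcing any boundary contact of a minimizer to be transverse and confined to the endpoints. I would either cite the standard geodesic-convexity result for manifolds with strictly convex boundary or, to keep the argument self-contained, observe that this is the very fact already used in Lemma \ref{convex}(ii) and so may be taken as established, after which the corollary follows immediately.
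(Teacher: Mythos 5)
Your proof is correct and takes essentially the same route as the paper's: existence of a minimizing geodesic between the two points from strict convexity of $\partial\Sigma$, then the total convexity of $\mathcal{C}$ (Lemma \ref{convex}(ii), which you re-derive inline via convexity of $\Psi$ along the geodesic) forces $\gamma([0,1])\subset\mathcal{C}$, giving connectedness. Your observation that the points $p_1,p_2$ in item (ii) should be read as lying in $\mathcal{C}$ rather than in $\Sigma$ is a fair correction of a slip in the statement.
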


 \begin{proof}
 Since $\Psi:\Sigma \rightarrow \R$ is a continuous function and $\Sigma$ is compact, then the set $\mathcal{C}$ given in \eqref{conjC} is not empty. If $\mathcal{C}$ contains a single point we have $i)$. 

Now, suppose that $\mathcal{C}$ contains more than one point, say $p_1$ and $p_2$ with $p_1\neq p_2$. Then, there is a minimizing geodesic $\gamma:[0,1]\to \Sigma$ such that $\gamma(0)=p_1$ and $\gamma(1)=p_2,$ because $\partial \Sigma$ is strictly convex.  By the item $ii)$ of Lemma \ref{convex} we have $\gamma([0,1])\subset \mathcal{C}$ and thus $\mathcal{C}$ is a connected set.
\end{proof}

\section{Proof of the Theorem \ref{classifcmc}}\label{sec:theoremain}

 \begin{proof} (Theorem \ref{classifcmc}) Let $\mathcal{C}$ be a set considered in Lemma \ref{convex}. First, suppose that $\mathcal{C}$ contains a single point. We claim that $\Sigma$ is a topological disk. In fact, we prove this affimation using the same idea as in \cite{ambrozio2016gap}.

Let $[\alpha]$ be a given homotopy class in $\Sigma$ with basis at $p\in\mathcal{C}.$ Suppose that $[\alpha]$ is a nontrivial homotopy class. Since $\partial \Sigma$ is strictly convex, we can find a geodesic loop $\gamma:[0,1]\to\Sigma$ with $\gamma(0)=\gamma(1)=p$ and $\gamma\in[\alpha].$ Observe that, $\gamma([0,1])\subset\mathcal{C},$ because $\mathcal{C}$ is totally convex. Since $\mathcal{C}=\{p\}$ and $[\alpha]$ is nontrivial class we have a contradiction. Therefore $\pi_1(\Sigma,p)=0$ and we conclude that $\Sigma$ is a topological disk.


Now, suppose that $\mathcal{C}$ contains more than a single point. Let $\gamma:[0,1]\rightarrow\mathcal{C}$ be a minimizing geodesic  joining two points in $\mathcal{C}$. By the item $ii)$ of Corollary \ref{pointsc} we have $\gamma([0,1])\subset \mathcal{C}$. Consider $c=\mbox{min}_\Sigma\Psi(x)$ and denote by $\overline{\nabla} \Psi $ the gradient of the function $\Psi$ in $(\B_r^3,\bar{g})$, and by $\nabla \Psi$ the gradient of $\Psi$  when restricted to $\Sigma$. Since $\gamma(t)$ is a critical point of $\Psi_{\,|_{\Sigma}}$ for each $t\in[0,1]$, we have that $\nabla \Psi = 0,$ along $\gamma,$ that is, there is a function $n(t)$ such that $\overline{\nabla}\Psi=n(t)N$, because $\nabla \Psi = (\overline{\nabla} \Psi)^T$. Since the function $\Psi$ is radial, the vector field $\overline{\nabla}\Psi$ is normal to the level sets of $\Psi$ that in this case, are spheres. Consider the sphere $\S_{\lambda}^2$ with radius $\lambda<r$ centred at the origin which satisfies $\Psi(x) = c$, $\forall \,x$ $\in$ $\S_{\lambda}^2$. As $\overline{\nabla}\Psi=n(t)N$ in $\gamma(t)$, we have that $N$ is normal to $\S_{\lambda}^2$ at the points $x=\gamma(t)$. As $\gamma(t)$ is a geodesic in $\Sigma$, it is also a geodesic in $\S_{\lambda}^2$. Therefore, $\gamma(t)$ is an arc of the great circle.

Let $\pi \subset \R^3$ be the plane passing through the origin such that  $\gamma \subset \pi$, and let $E$ be a unit normal vector in $\R^3$ orthogonal to $\pi$. Consider the vector field 
$$V = \vec{x} \wedge E .$$
The Lie derivative of $<,>$ in direction to $V$ satisfies $\mathcal{L}_{V}\left\langle, \right\rangle=2f\left\langle, \right\rangle$, where $f\equiv0$. Thus, $V$ is a Killing vector field in $\R^3$ with respect to the Euclidean metric. As $V$ is a conformal vector field with respect to the Euclidean metric, under the conformal change $\bar{g}=e^{2h}\left\langle ,\right\rangle$, $V$ is also a conformal vector field with respect to the metric $\bar{g}$ and satisfies 
$$
\mathcal{L}_{V}\bar{g}=2(V\left(h\right))\bar{g}.
$$
As
$$ 
 V\left(h(x)\right)=\left\langle \nabla h,V\right\rangle=2u'(\left|\vec{x}\right|^2)\left\langle \vec{x}, V\right\rangle=2u'(\left|\vec{x}\right|^2)\left\langle \vec{x},\vec{x} \wedge E \right\rangle=0, 
$$
we conclude that $V$ is a killing vector field with respect to metric $\bar{g}$. Now, define the function $v:\Sigma \rightarrow \R $ by
\begin{equation}
v(x)=\bar{g}(V,N),
\end{equation} 
where $N$ is a unit normal vector at the point $x\in \Sigma$. By the Proposition 1 in \cite{fornari2004killing}, the function $v$ is a Jacobi function, is that, $v$ is a solution of the problem 
\begin{equation}\label{eqche2}
\Delta_{_\Sigma} v + \left(\bar{R}\mbox{ic}(N) + \left|A\right|^2 \right)v = 0, 
\end{equation}
where $\bar{R}$ic$(N)$ denote the Ricci curvature of $(\B_r^{3},\bar{g})$ in the direction $N$. Notice that at points of $\gamma$ we have
$$
v(\gamma(t))=\bar{g}\left( V,N \right) =\bar{g}\left( \vec{x}\wedge E, N \right)=e^{2h}\left\langle \vec{x}\wedge E, N \right\rangle=0,
$$
since $N$ is parallel to $\vec{x}$ when restricts to $\gamma$. Then,   
\begin{equation}\label{cr1}
\dfrac{d}{dt}(v\circ \gamma )(t)=0\,, \quad \forall \, t.
\end{equation}
For each $t_0 \in [0,1]$, let us consider the curve $\beta:(-\varepsilon,\varepsilon)\rightarrow \Sigma$ such that $\beta(0)=\gamma(t_0)$ and $\beta'(0) \bot \gamma'(t_0)$. Let $N(s)$ be a restriction of the vector field  $N$ to $\beta$. Then,
\begin{eqnarray}\label{eqderivjac}
\frac{d}{ds}(v \circ \beta)(s)_{\displaystyle|_{s=0}}
																				&=& \frac{d}{ds}\left( e^{2h} \right)_{\displaystyle|_{s=0}}\left\langle V,N \right\rangle_{|_{s=0}} \nonumber\\ & & + e^{2h}\left\{ \left\langle \beta'(0)\wedge E,N(s)\right\rangle + \left\langle \beta(0) \wedge E,N'(0)\right\rangle\right\},
\end{eqnarray}
since $\vec{x}// N$ on $\gamma$, $E // \beta'(0)$ and $N'(0) // E$. 
Therefore, the equation $(\ref{eqderivjac})$ becomes 
$$
\frac{d}{ds}(v \circ \beta)(s)_{|_{s=0}} = 0.
$$

This shows that $\gamma(t)$ is a critical point of $v:\Sigma\to\mathbb{R}, \forall t.$
By a result due to S. Y. Cheng (\cite{cheng1976eigenfunctions}, Theorem 2.5), the critical points on nodal set $v^{-1}(0)$ of a function which satisfies an equation like $(\ref{eqche2})$ contains only isolated critical points. As we have seen above, every point in $\gamma\subset v^{-1}(0)$ is a critical point of $v$. In this case, we conclude that $v\equiv 0$ and therefore $\Sigma$ is tangent to $V$. As the vector field $V$ on $\R^3$ is induced by rotations around the axis that is orthogonal to the plane $\pi$, and passes through the center of great circle which contains $\gamma$, we say that $\Sigma$ is rotationally symmetric and the item $ii)$ follows.

\end{proof}

\begin{corollary}\label{corodesiestr} Let $\Sigma$ be a compact free boundary CMC surface in $(\B_r^3,\bar{g})$.  If \, 

$$ \dfrac{|\Phi|^2}{\sigma^2} \bar{g}(\vec{x},N)^2 < \dfrac{1}{2}\left(2+\dfrac{H}{\sigma}\bar{g}(\vec{x},N)\right)^2,$$
then $\Sigma$ is diffeomorphic to a disk $\mathbb{D}^2$.  
\end{corollary}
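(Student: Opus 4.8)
The plan is to reduce the statement to the single-point alternative already isolated in the proof of Theorem \ref{classifcmc}, and to show that the strict inequality forces precisely that alternative. First I would recall the identity established in Proposition \ref{hesnong}, namely
$$\lambda_1\lambda_2=\frac14\left(2+\frac{\overline{H}}{\sigma}\bar{g}(\vec{x},N)\right)^2-\frac{|\Phi|^2}{2\sigma^2}\bar{g}(\vec{x},N)^2,$$
where $\lambda_1,\lambda_2$ are the eigenvalues of the symmetric bilinear form $\mathcal{B}_0$ from Lemma \ref{authess}. The strict hypothesis makes the right-hand side positive at every point of $\Sigma$, so $\lambda_1\lambda_2>0$ everywhere; in particular both eigenvalues are nonzero and share the same sign, whence the trace $\lambda_1+\lambda_2=2+\frac{\overline{H}}{\sigma}\bar{g}(\vec{x},N)$ never vanishes on $\Sigma$.

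Next I would invoke connectedness. Since $\lambda_1+\lambda_2$ is continuous and nowhere zero on the connected surface $\Sigma$, it must have constant sign. Because $Hess_{\Sigma}\Psi=2\sigma^2\Phi'(\varphi)\mathcal{B}_0$ with $2\sigma^2\Phi'(\varphi)>0$ (using $\Phi'>0$ from Proposition \ref{lemasolfund} together with $\sigma>0$), this means that $\Psi=\Phi(\varphi)$ is either strictly convex on all of $\Sigma$ (when $\lambda_1+\lambda_2>0$) or strictly concave on all of $\Sigma$ (when $\lambda_1+\lambda_2<0$). This is exactly the point where the corollary departs from Theorem \ref{classifcmc}: the second hypothesis of the theorem was what ruled out the concave case, and here it must instead be recovered from this constant-sign argument.

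Finally, I would extract a single extremal point and run the homotopy argument. In the convex case the minimum set $\mathcal{C}$ is totally convex by Lemma \ref{convex}(ii); if it contained two distinct points, a minimizing geodesic joining them (which exists since $\partial\Sigma$ is strictly convex by Lemma \ref{convex}(i)) would make $t\mapsto\Psi(\gamma(t))$ simultaneously strictly convex and constant, a contradiction, so $\mathcal{C}=\{p\}$. In the concave case the identical reasoning applied to the maximum set again yields a single point $p$. With $\Psi$ attaining its extremum at the single point $p$, the geodesic-loop argument from the proof of Theorem \ref{classifcmc} applies verbatim: any nontrivial homotopy class based at $p$ would be represented by a geodesic loop lying in the one-point extremum set, which is impossible; hence $\pi_1(\Sigma,p)=0$ and $\Sigma$ is diffeomorphic to $\mathbb{D}^2$. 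The main obstacle is precisely this loss of the second pinching hypothesis, which I resolve through the connectedness/constant-sign step; the strictness of the inequality then does the remaining work, upgrading the totally convex extremum set from a possible geodesic segment — the rotationally symmetric alternative of Theorem \ref{classifcmc} — to a single point.
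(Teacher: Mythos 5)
Your proof is correct and follows the same basic strategy as the paper: strict inequality forces strict definiteness of $Hess_{\Sigma}\Psi$, hence the extremum set is a single point, and the geodesic-loop argument gives $\pi_1(\Sigma,p)=0$. The paper's own proof is a two-line remark asserting that the strict inequality "ensures that both eigenvalues of $\Psi$ are positive," which, as you rightly observe, does not follow from $\lambda_1\lambda_2>0$ alone once the second hypothesis of Theorem \ref{classifcmc} is dropped; your connectedness/constant-sign argument, together with the separate treatment of the a priori possible strictly concave case via the maximum set, genuinely fills this gap. A slightly shorter way to close it: the free boundary condition forces $\bar{g}(\vec{x},N)=0$ along $\partial\Sigma$, so $\lambda_1+\lambda_2=2+\frac{H}{\sigma}\bar{g}(\vec{x},N)=2>0$ there, and since the trace is continuous and nowhere zero on the connected surface $\Sigma$, it is positive everywhere; this rules out the concave branch outright and recovers the paper's claim that both eigenvalues are positive. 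Either route is fine; yours is self-contained and correct.
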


\begin{proof}
We just note that the condition above ensures that booth eigenvalues of the function $\Psi$ are positive. Therefore, the function $\Psi$ is strictly convex and consequently, the set $\mathcal{C}$
has a single point. 
\end{proof}

\begin{observation} When $(\B_r^3,\bar{g})$ has constant sectional curvature, Nitsche \cite{nitsche1985stationary}, Ros and Souam  \cite{ros1997stability} has proved that if $\Sigma$ is a free boundary CMC disk in $(\B_r^{3},\bar{g})$, then $\Sigma$ is totally umbilical. In particular, if $\Sigma$ is a free boundary  minimal disk, then $\Sigma$ is a totally geodesic. Thus, if the condition $(\ref{gapmin})$ is satisfied in $\Sigma$ and occurs the equality at some point $p\in \Sigma$, we conclude that $\Sigma$ is not a disk. Indeed, otherwise we will have    
$$
0=\frac{1}{\sigma^2}\left|A\right|^2 \bar{g}(\vec{x},N)^2 = 2 \mbox{\,\,at some point } \, p \in \Sigma, 
$$
a contradiction and thus precisely, occurs the condition $ii)$ in Teorema\,\ref{teoprin2h0}.

Now, if $(\B_r^3,\bar{g})$ does not have constant sectional curvature and $\Sigma$ is a free boundary minimal surface in $(\B_r^3,\bar{g})$ such that the condition $(\ref{gapmin})$ is satisfied and occurs equality at some point $p \in \Sigma$, it is not possible guarantee that $\Sigma$ is not a disk. Because we do not know if a analogous of Nitsche's Theorem, or results proved by Ros and Souam, remains valid to such spaces.  
\end{observation}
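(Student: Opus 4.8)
The plan is to read this observation as a short deduction resting on two ingredients already available to us: the topological dichotomy of Corollary \ref{teoprin2h0} and the classical rigidity of free boundary disks in constant-curvature balls. I would fix a free boundary minimal surface $\Sigma$ satisfying the gap condition \eqref{gapmin} for which equality holds at some point $p\in\Sigma$, so that $\frac{|A|^2(p)}{\sigma^2(p)}\bar{g}(\vec{x},N)^2(p)=2$, and the goal is to decide which of the two alternatives of Corollary \ref{teoprin2h0} is actually realized.

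First I would record that Corollary \ref{teoprin2h0} already restricts us to exactly two possibilities: either $\Sigma$ is diffeomorphic to a disk, or $\Sigma$ is rotationally symmetric with nontrivial topology. The entire content of the first half of the observation is then to eliminate the disk alternative under the extra assumption that the ambient metric has constant sectional curvature. For this I would argue by contradiction. Suppose $(\B_r^3,\bar{g})$ has constant sectional curvature and $\Sigma$ is a free boundary minimal disk. By the rigidity results of Nitsche \cite{nitsche1985stationary} and of Ros and Souam \cite{ros1997stability}, a free boundary CMC disk in such a ball is totally umbilical, and in the minimal case this means totally geodesic, so $|A|^2\equiv 0$ on $\Sigma$. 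Evaluating the equality case of \eqref{gapmin} at $p$, and using that $\sigma>0$ on $\B_r^3$ by our standing hypothesis, then forces $0=\frac{|A|^2(p)}{\sigma^2(p)}\bar{g}(\vec{x},N)^2(p)=2$, which is absurd. Hence $\Sigma$ cannot be a disk, and the dichotomy leaves only alternative ii): $\Sigma$ is rotationally symmetric with nontrivial topology.

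I expect the one genuine point of substance to be the dependence on the Nitsche/Ros–Souam rigidity, since that is precisely what ties the first conclusion to the constant-curvature setting and is the step that refuses to generalize. For the second half of the observation there is nothing to establish in a positive direction; instead I would simply point out that the contradiction above consumes the total geodesy of free boundary minimal disks, a property not known to persist when $(\B_r^3,\bar{g})$ fails to have constant curvature. Absent an analogue of Nitsche's theorem in that generality, one cannot exclude a free boundary minimal disk on which $|A|^2$ is not identically zero; on such a surface equality at $p$ is perfectly compatible with $\Sigma$ being a disk, so the disk alternative can no longer be discarded and the negative claim is best phrased as the breakdown of the contradiction step rather than as a theorem.
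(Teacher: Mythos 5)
Your proposal is correct and follows essentially the same route as the paper: you invoke the dichotomy of Corollary~\ref{teoprin2h0}, use the Nitsche \cite{nitsche1985stationary} and Ros--Souam \cite{ros1997stability} rigidity to conclude that a free boundary minimal disk in the constant-curvature case is totally geodesic (so $|A|^2\equiv 0$), and derive the contradiction $0=2$ from equality in \eqref{gapmin} at $p$, exactly as in the paper's argument. Your closing remark that the negative claim in the non-constant-curvature case is the breakdown of this contradiction step, rather than a theorem, also matches the paper's intent.
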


\section{Examples}\label{sec:examples}

In this section, we show that there are CMC surfaces $\Sigma \subset (\B_r^3,\bar{g})$ with strictly convex boundary which satisfies conditions in Theorem \ref{classifcmc}. The case where $\bar{g}$ is a metric of constant sectional curvature the examples was considered by \cite{ambrozio2016gap} and \cite{HaXa}. 

Notice that if $\Sigma$ is a free boundary totally geodesic disk in $(\B_r^3,\left\langle , \right\rangle)$, the equation   $(\ref{k_i-barra01})$ says that $\Sigma$ is also a free boundary totally geodesic disk in $(\B_r^3,\bar{g})$  and therefore, the condition $(\ref{gapmin})$ in Theorem \ref{classifcmc} is trivially satisfied. Thus, we will focus in the case where the surface $\Sigma$ is rotationally symmetric with nontrivial topology.

Let $x:(-c,c)\rightarrow\R$ $(c>0)$ be a smooth function with $x(t)>0\ ,\forall\, t\in(-c,c)$. Consider the surface $\Sigma \subset (\R^3, \left\langle , \right\rangle )$ obtained by revolution of the curve $\beta(t)=(x(t),0,t)$ around the $Oz-$axis, whose parametrization is given by 
\begin{equation}\label{equaminrevolconf}
X(t,\theta)=(x(t)\cos(\theta),x(t)\sin(\theta),t).
\end{equation}
Consider in $\Sigma$ the orientation:
$$
N=\frac{1}{\sqrt{1+x'(t)^2}}(-\cos(\theta),-\sin(\theta),x'(t)).
$$

To avoid confusion with respect to notation, until the finish of this section, we will use $N$ and $\bar{N}$ to denote the unit  normal vector field to $\Sigma\subset(\mathbb{B}_r^3,\left\langle , \right\rangle)$ and $\Sigma\subset(\B^3_r,\bar{g})$ respectively. Moreover, $x$ will denote the function $x:(-c,c)\rightarrow\R$, whereas $\vec{x}$ will denote the position vector given by $\vec{x}=X(t,\theta)$. 

The principal curvatures of $\Sigma$ at point $(t,\theta)$ are given by 
\begin{equation}\label{k1}
k_1=\frac{-x''(t)}{\sqrt{(1+x'(t)^2)^3}} \mbox{\, and \, }
k_2=\frac{1}{x(t)\sqrt{1+x'(t)^2}}.
\end{equation}

For each $t \in (-c,c)$, consider the curve 
$$
\alpha_t(\omega)=X(\{t\}\times [0,2\pi x(t)])=\left(x(t)\cos\left(\frac{\omega}{x(t)}\right),x(t)\sin\left(\frac{\omega}{x(t)}\right),t\right) 
$$
parametrized by arc length. Let $\nu_t$ be a normal vector field to $\alpha_t$ given by 
$$
\nu_t(\omega)=\frac{-1}{\sqrt{1+x'(t)^2}}\left(x'(t)\cos\left(\frac{\omega}{x(t)}\right),x'(t)\sin\left(\frac{\omega}{x(t)}\right),1 \right).
$$
Define the function
\begin{equation}\label{funcfcurv}
f(t):=f(t,\omega)=\left\langle \alpha_t''(\omega), \nu_t(\omega) \right\rangle=\frac{x'(t)}{x(t)\sqrt{1+x'(t)^2}}
\end{equation}

\begin{observation}\label{funcfrelcurvgeo}
Notice that if $\alpha_t$ is a boundary of the surface $X([t-\epsilon,t] \times [0,2\pi x(t)])$, then the geodesic curvature of $\alpha_t$ is given by $k_{g_e}(t)=f(t)$. If $\alpha_t$ is the boundary of surface $X([t,t+\epsilon]\times[0,2\pi x(t)])$,  in this case we have $k_{g_e}(t)=-f(t)$. 
\end{observation}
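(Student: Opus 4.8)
The plan is to read off the geodesic curvature directly from its definition, as used earlier in Lemma~\ref{convex}, namely as the inner product of the curvature vector of the boundary curve with the \emph{inner} unit normal $\bar\nu$ of that curve taken inside the surface. For a unit-speed curve $\alpha_t$ lying on $\Sigma \subset \R^3$, only the tangential-to-$\Sigma$ part of $\alpha_t''$ contributes, and since $\bar\nu$ is itself tangent to $\Sigma$ and orthogonal to $\alpha_t'$, this contribution is captured exactly by $\langle \alpha_t'', \bar\nu\rangle$. The whole statement thus reduces to identifying $\bar\nu$ and chasing a single sign.

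First I would verify that $\alpha_t$ is parametrized by arc length: differentiating gives $\alpha_t'(\omega) = (-\sin(\omega/x(t)), \cos(\omega/x(t)), 0)$, which is a unit vector. Next I would recognize the vector $\nu_t$ appearing in the definition of $f$ as (minus) the normalized $t$-coordinate field of the parametrization $X$: computing $\partial_t X(t,\omega/x(t)) = (x'(t)\cos(\omega/x(t)), x'(t)\sin(\omega/x(t)), 1)$ shows that $\nu_t = -\partial_t X/|\partial_t X|$ with $|\partial_t X| = \sqrt{1+x'(t)^2}$. Consequently $\nu_t$ is a unit vector tangent to $\Sigma$ (it lies in the span of $\partial_t X$ and $\partial_\theta X$) and orthogonal to $\alpha_t'$ (which is proportional to $\partial_\theta X$), so $\nu_t$ is precisely $\pm$ the in-surface unit normal of $\alpha_t$. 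The computation already recorded in $(\ref{funcfcurv})$ is then exactly $\langle \alpha_t'', \nu_t\rangle = f(t)$, using $\alpha_t''(\omega) = -\frac{1}{x(t)}(\cos(\omega/x(t)), \sin(\omega/x(t)), 0)$.

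It then remains only to match $\nu_t$ against the genuine inner normal for each of the two patches. For $X([t-\epsilon,t]\times[0,2\pi x(t)])$ the surface sits on the side of smaller parameter values, so the inner normal to $\alpha_t$ points in the $-\partial_t X$ direction, i.e.\ it coincides with $\nu_t$; hence $k_{g_e}(t) = \langle \alpha_t'', \nu_t\rangle = f(t)$. For $X([t,t+\epsilon]\times[0,2\pi x(t)])$ the surface lies on the side of larger parameter values, so the inner normal is $+\partial_t X/|\partial_t X| = -\nu_t$, giving $k_{g_e}(t) = \langle \alpha_t'', -\nu_t\rangle = -f(t)$.

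The substance here is minimal: the one place demanding care is the orientation bookkeeping in the last step, where one must ensure that the sign convention for $k_{g_e}$ (taken with respect to the inner normal, as in Lemma~\ref{convex}) is paired with the genuinely inward direction for each patch. Once $\nu_t$ is identified as $-\partial_t X/|\partial_t X|$, both formulas fall out of the same inner product with opposite signs, so I do not anticipate any real obstacle beyond this sign check.
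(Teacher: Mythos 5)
Your verification is correct and is exactly the computation the paper leaves implicit: the Observation is stated without proof, and its content is precisely the identification of $\nu_t=-\partial_tX/\lvert\partial_tX\rvert$ as the in-surface unit normal of $\alpha_t$ (inner for the patch $X([t-\epsilon,t]\times\cdot)$, outer for $X([t,t+\epsilon]\times\cdot)$), together with $\langle\alpha_t'',\nu_t\rangle=f(t)$ from $(\ref{funcfcurv})$. Your sign bookkeeping also matches the way the Observation is later used for $\partial\Sigma_s=\alpha_{-s}\cup\alpha_s$, where the paper takes $k_{g_e}(s)=f(s)$ and $k_{g_e}(-s)=-f(-s)$.
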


As well known, under conformal change $\bar{g}=e^{2h}\left\langle\,,\,\right\rangle$, the principal curvatures  $k_1$ and $k_2$ becomes 
\begin{equation}\label{k_i-barra}
\bar{k}_i=e^{-h}\left(k_i-N(h)\right)\,\mbox{\,for\,} \, i=1,2 ,
\end{equation}
where $N(h)=\left\langle \texttt{grad}\,h, N \right\rangle$. As $h=u(\left|x\right|^2)$, follows that
$$
\texttt{grad}\,h=2u'(\left|\vec{x}\right|^2)\vec{x},
$$
and we can write the equation $(\ref{k_i-barra})$ as
\begin{equation}\label{k_i-barra01}
\bar{k}_i=e^{-h}\left(k_i-2u'(\left|\vec{x}\right|^2)\left\langle \vec{x},N\right\rangle\right)\quad \quad \quad i=1,2.
\end{equation}

If $(\ref{equaminrevolconf})$ is a parametrization of a CMC surface with mean curvature $\bar{H}$ in $(\B_r^3,\overline{g})$, then the above equation provides 
\begin{equation}\label{equaminconf}
\bar{H}=e^{-h}\left(H-4u'(\left|\vec{x}\right|^2)\left\langle \vec{x},N\right\rangle\right).
\end{equation}

By considering the expression of normal vector $N$ and the position vector $\vec{x}$ above, we have  
\begin{equation}\label{suporteminconf}
\left\langle \vec{x},N\right\rangle = - \frac{x(t)-x'(t)t}{\sqrt{1+x'(t)^2}}. 
\end{equation}

Now, using the expressions of $k_1$, $k_2$ and $(\ref{suporteminconf})$, we can rewrite $(\ref{equaminconf})$ as 
\begin{equation}\label{equaminconf001}
\bar{H}=e^h\left(\frac{-x''(t)x(t)+x'^2+1}{x(t)(1+x'(t)^2)^{3/2}} + \dfrac{4u'(|\vec{x}|^2)}{(1+x'(t)^2)^{1/2}}(x(t)-x'(t)t)\right).
\end{equation}

Thus, the problem to find CMC surfaces in $(\B^3_a,\bar{g})$ parametrized by $(\ref{equaminrevolconf})$ is equivalent to problem to find solutions to the differential equation $(\ref{equaminconf001})$ given the initial conditions of interest. For example, if $u\equiv0$ and $\bar{H}\equiv 0$ then $(\B^3_a,\bar{g})$ (for $a=\infty$) is the euclidean space $\R^3$ and a solution to above equation with initial conditions $x(0)=c_0$ and $x'(0)=0$ is given by 
\begin{equation}\label{coshiper}
x(t)=c_0 \cosh\left(\frac{t}{c_0}\right),
\end{equation}
and it is defined for all $t\in \R$. 

In this setting, we focus on case where $\bar{k}_1+\bar{k}_2=0.$ More precisely, we show that there is a minimal surface in $(\B^3_r,\bar{g})$  with strictly convex boundary which satisfies the condition, 
\begin{equation}\label{gap001}
\frac{1}{\sigma^2}\left|A\right|^2 \bar{g}(\vec{x},\bar{N})^2 \leq 2.
\end{equation}

As we saw in demonstration of Proposition \ref{hesnong}, the above condition 
is equivalent to
\begin{equation}\label{autvarmaior0} 
\left(1+\frac{\bar{k}_1}{\sigma}\bar{g}(\vec{x},\bar{N})\right) \geq 0\,\, \mbox{\,and\,}\,\, \left(1+\frac{\bar{k}_2}{\sigma}\bar{g}(\vec{x},\bar{N})\right) \geq 0.
\end{equation}
The two above inequalities are equivalent to 
\begin{equation}\label{condgeralmingapconf}
-1 \leq \frac{\bar{k}_i}{\sigma}\bar{g}(\vec{x},\bar{N})\leq 1 \,\, \mbox{\,for\,}\,\, i=1 \,\, \mbox{\,or\,},\,\, i=2,
\end{equation}
because $\bar{k}_1+\bar{k}_2=0$.
By choosing $i=2$ we have, 
\begin{eqnarray}\label{autvar001}
\bar{k}_2\bar{g}(\vec{x},\bar{N})&=& e^{-h}\left(k_2 -2u'(\left|\vec{x}\right|^2)\left\langle \vec{x},N \right\rangle\right)e^{2h}\left\langle \vec{x},e^{-h}N\right\rangle \nonumber\\  &=& \left(k_2 - 2u'(\left|\vec{x}\right|^2)\left\langle\vec{x},N \right\rangle\right)\left\langle\vec{x},N \right\rangle \nonumber \\
																&=& \left(\frac{1}{x(t)\sqrt{(1+x'(t)^2)}}  - 2u'(\left|\vec{x}\right|^2)\left(-\frac{x(t)-x'(t)t}{\sqrt{1+x'(t)^2}}\right)\right)\times \nonumber \\ & & \left(- \frac{x(t)-x'(t)t}{\sqrt{1+x'(t)^2}}\right)\nonumber  \\
																&=& \left(-\frac{1}{x(t)\left(1+x'(t)^2\right)} - 2u'(\left|\vec{x}\right|^2)\frac{x(t)-x'(t)t}{1+x'(t)^2}\right)\left( x(t)-x'(t)t \right)\nonumber \\
                                &=& - \frac{\Big(1+2u'(\left|\vec{x}\right|^2)\big(x(t)-x'(t)t\big)x(t)\Big)\big(x(t)-x'(t)t\big)}{x(t)(1+x'(t)^2)}. 
\end{eqnarray}
As $\sigma=1+2u'(\left|\vec{x}\right|^2)\left|\vec{x}\right|^2=1+2u'(\left|\vec{x}\right|^2)(x(t)^2+t^2)$ we have  
$$
\frac{\bar{k}_2}{\sigma}\bar{g}(\vec{x},\bar{N})=- \frac{\Big(1+2u'(\left|\vec{x}\right|^2)\big(x(t)-x'(t)t\big)x(t)\Big)\big(x(t)-x'(t)t\big)}{(1+2u'(x(t)^2+t^2)(x(t)^2+t^2))x(t)(1+x'(t)^2)},
$$
and the condition $(\ref{condgeralmingapconf})$ becomes equivalent to 
\begin{equation}\label{condgeralmingapconf00}
-1 \leq \frac{\Big(1+2u'(\left|\vec{x}\right|^2)\big(x(t)-x'(t)t\big)x(t)\Big)\big(x(t)-x'(t)t\big)}{(1+2u'(\left|\vec{x}\right|^2)(x(t)^2+t^2))x(t)(1+x'(t)^2)} \leq 1.
\end{equation}

Let $\Sigma \subset (\B_{r}^3,\bar{g})$ be a minimal surface parametrized by $(\ref{equaminrevolconf})$, where
$x:[-s,s] \rightarrow \R$ is a solution of $(\ref{equaminconf001})$ to $\bar{H}=0$ and $r=\sqrt{x(s)^2+s^2}$. Thus, it is sufficient to verify the inequality $(\ref{condgeralmingapconf00})$ to the function $x:[-s,s] \rightarrow \R$ so that the condition $(\ref{gap001})$    
be satisfied in $\Sigma$.


When $\bar{g}$ is the Euclidean metric, we have an explicit solution to equation $(\ref{equaminconf001})$ as in $(\ref{coshiper})$. However, the general case $\bar{g}=e^{2u(\left|x\right|^2)}\left\langle\,,\,\right\rangle$ where it is considered any smooth function $u$ we do not have, in general, a explicit solution for the equation $(\ref{equaminconf001})$. Thus, the verification of the condition $(\ref{condgeralmingapconf00})$ to this cases is not a straightforward calculation. 

Let $(\R^3,\bar{g})$ be a Gaussian Space, i.e, the Euclidean space with conformal metric $\bar{g}=e^{2u(\left|x\right|^2)}\left\langle\,,\,\right\rangle$, where $u(\left|x\right|^2)=-\frac{\left|\vec{x}\right|^2}{8}$. Now, we will focus only in this case.


As we saw earlier, the vector field $\vec{x}$ is conformal with respect to $\bar{g}=e^{-\frac{\left|\vec{x}\right|^2}{4}}\left\langle ,\right\rangle$ and satisfies $\mathcal{L}_{\vec{x}}\bar{g}=2\sigma\bar{g}$, where $\sigma=\frac{4-\left|\vec{x}\right|^2}{4}$.

For the immersion $\Sigma \subset (\B^3_r,\bar{g})$, the existence of the function $\Psi:\B^3_r \rightarrow \R$ such that Hess$_{\Sigma}\Psi \geq 0$ is guaranteed by Theorem \ref{classifcmc} provided that the potential function $\sigma$ of conformal vector field $\vec{x}$ satisfies $\sigma>0$. Thus, for $\sigma=\frac{4-\left|\vec{x}\right|^2}{4}$ we have $\sigma > 0$ when $r<2$.

Now, for $u(\left|x\right|^2)=-\frac{\left|\vec{x}\right|^2}{8}$ the equation $(\ref{equaminconf001})$ becomes
\begin{equation}\label{equaminconf00001}
\frac{x''(t)}{1+x'(t)^2}=\frac{1}{x(t)} - \frac{1}{2} (x(t)-x'(t)t).
\end{equation}

\begin{lemma} Let $x:(-c,c) \rightarrow \R$ be a solution to the above equation with initial conditions $x(0)<\sqrt{4-2\sqrt{2}}$ and $x'(0)=0$. Then, there is $0<\delta<c$ and $r=r(\delta)$ such that the parametrization given by $(\ref{equaminrevolconf})$ with $x_{|_{[-\delta,\delta]}}$ generates a minimal surface $\Sigma_{\delta} \subset \left(\B^3_r,e^{-\frac{\left|\vec{x}\right|^2}{4}}\left\langle , \right\rangle \right)$ whose boundary $\partial \Sigma_{\delta} \subset \partial \B_r^3$ is strictly convex. 
\end{lemma}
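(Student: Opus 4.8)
The plan is to build the annulus from the unique solution of the initial value problem and then choose the truncation parameter $\delta$ small. First I would invoke the existence and uniqueness theorem for ODEs: since the right-hand side of $(\ref{equaminconf00001})$ is smooth near $(x(0),0)$ with $x(0)>0$, there is a unique smooth solution $x$ on a maximal interval $(-c,c)$, positive on a neighbourhood of $0$. The structural observation that drives everything is that $(\ref{equaminconf00001})$ is invariant under $t\mapsto -t$: if $x(t)$ solves it then so does $x(-t)$, and since both carry the data $x(0),\,x'(0)=0$, uniqueness forces $x$ to be \emph{even}. Hence $x(\delta)=x(-\delta)$ and $x'(-\delta)=-x'(\delta)$ for all $\delta\in(0,c)$.

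With evenness in hand, for each small $\delta>0$ I set $r=r(\delta):=\sqrt{x(\delta)^2+\delta^2}$. The two boundary circles $X(\{\pm\delta\}\times[0,2\pi])$ are latitude circles of Euclidean radius $x(\pm\delta)$ in the planes $z=\pm\delta$, and since $x(\delta)^2+\delta^2=x(-\delta)^2+\delta^2=r^2$ they both lie on the Euclidean sphere of radius $r$; thus $\partial\Sigma_\delta\subset\partial\B^3_r$, and $\Sigma_\delta$ is a rotational annulus because $x>0$ on $[-\delta,\delta]$. To see the whole surface sits inside the ball I would study $\phi(t):=x(t)^2+t^2=|\vec{x}|^2$: evenness gives $\phi'(0)=0$, while differentiating $(\ref{equaminconf00001})$ at $0$ yields $x''(0)=\tfrac{1}{x(0)}-\tfrac{x(0)}{2}$ and hence $\phi''(0)=4-x(0)^2>0$ (using $x(0)<\sqrt{4-2\sqrt2}<2$). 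So $t=0$ is a strict minimum of $\phi$, whence for $\delta$ small $\phi$ increases on $[0,\delta]$, its maximum over $[-\delta,\delta]$ is attained at the endpoints, and $|\vec{x}|\le r$ throughout; that is, $\Sigma_\delta\subset\B^3_r$ (and $r(\delta)<2$ for small $\delta$, so $\sigma>0$ on the ball). Being parametrized by a solution of the $\bar H=0$ equation, $\Sigma_\delta$ is minimal.

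The heart is strict convexity of the boundary in the metric $\bar g$. By Remark \ref{funcfrelcurvgeo} the Euclidean geodesic curvature of the top circle is $f(\delta)=x'(\delta)/\bigl(x(\delta)\sqrt{1+x'(\delta)^2}\bigr)$, and by evenness the bottom circle carries the same value. Applying the conformal transformation of geodesic curvature $(\ref{curvgeomudconf0})$ with $\mathrm{grad}\,h=2u'(|x|^2)\vec{x}=-\tfrac14\vec{x}$ from $(\ref{gradh})$, the inner unit normal $\nu=-\tfrac{1}{\sqrt{1+x'^2}}(x'\cos\theta,x'\sin\theta,1)$ to $\partial\Sigma_\delta$ in $\Sigma_\delta$, and the direct computation $\langle\vec{x},\nu\rangle=-(x x'+t)/\sqrt{1+x'^2}$, I would reduce positivity of $\bar k_{g_e}$ to positivity of
\[
g(\delta)=\frac{x'(\delta)}{x(\delta)}-\frac14\bigl(x(\delta)x'(\delta)+\delta\bigr).
\]
Since $x'(0)=0$ one has $g(0)=0$, so the sign for small $\delta$ is governed by $g'(0)=\tfrac{x''(0)}{x(0)}-\tfrac14\bigl(x(0)x''(0)+1\bigr)$. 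Substituting $x''(0)=\tfrac{1}{x(0)}-\tfrac{x(0)}{2}$ and writing $a=x(0)$ gives
\[
g'(0)=\frac{a^4-8a^2+8}{8a^2},
\]
which is positive precisely when $a^2<4-2\sqrt2$. This is exactly the standing hypothesis $x(0)<\sqrt{4-2\sqrt2}$, so $g(\delta)>0$ and hence $\bar k_{g_e}>0$ for all sufficiently small $\delta>0$: the boundary is strictly convex.

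Finally I would fix one such $\delta$ and take $r=r(\delta)$; then $\Sigma_\delta$ is minimal in $(\B^3_r,\bar g)$, contained in the ball, with $\partial\Sigma_\delta\subset\partial\B^3_r$ strictly convex, as claimed. The main obstacle is the convexity step: because the geodesic curvature degenerates to first order at $\delta=0$ (indeed $g(0)=0$), one cannot read off its sign at a single point and must expand $g$ to first order, and it is precisely this expansion that forces the threshold $\sqrt{4-2\sqrt2}$, the relevant root $4-2\sqrt2$ of $s^2-8s+8$ in $s=a^2$.
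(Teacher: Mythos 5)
Your proposal is correct and follows essentially the same route as the paper: evenness of $x$ from ODE uniqueness under $t\mapsto -t$, reduction of strict convexity of both boundary circles to positivity of the function $\frac{x'}{x}-\frac{1}{4}(xx'+t)$ (the paper's $\bar f$ up to a positive factor), which vanishes at $t=0$, and the first-order expansion giving $\frac{x(0)^4-8x(0)^2+8}{8x(0)^2}>0$ exactly under the hypothesis $0<x(0)<\sqrt{4-2\sqrt{2}}$. Your extra check that $\phi(t)=x(t)^2+t^2$ has a strict minimum at $t=0$ (so $\Sigma_\delta\subset\B^3_r$) is a small detail the paper leaves implicit, but the argument is the same.
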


\begin{proof} Firstly, we observe that 
\begin{equation}\label{solupar}
x(-t)=x(t)\,\forall \,t\in(-c,c).
\end{equation}
In fact, we define $\tilde{x}:(-c,0]\rightarrow\R$ by $\tilde{x}(s)=x(-s)$. Then, $\tilde{x}'(s)=-x'(-s)$ and $\tilde{x}''(s)=x''(-s).$ This implies that
\begin{eqnarray*}
\frac{\tilde{x}''(s)}{1+\tilde{x}'(s)^2}&=&\frac{x''(-s)}{1+x''(-s)^2}\\
																				&=&\frac{1}{x(-s)} - \frac{1}{2} (x(-s)-x'(-s)(-s))\\
                                        &=&\frac{1}{\tilde{x}(s)} - \frac{1}{2} (\tilde{x}(s)-\tilde{x}'(s)s).
\end{eqnarray*}

Therefore, $\tilde{x}$ is also a solution to equation $(\ref{equaminconf00001})$ and besides that, satisfies $\tilde{x}(0)=x(0)$ and $\tilde{x}'(0)=x'(0)=0$. By uniqueness of solution to differential equation we have that $\tilde{x}(s)=x(s)$ in $(-c,0]$, whence the statement follows.  

Thus, the function $f$ defined in $(\ref{funcfcurv})$ satisfies $f(t)=-f(-t)$ and 
$$
\bar{f}(t):=e^{^{\left(\frac{x(t)^2+t^2}{8}\right)}}\left( f(t) - \frac{x(t)x'(t)+t}{4\sqrt{1+x'(t)^2}} \right),\quad t \in (-c,c)
$$
satisfies $\bar{f}(t)=-\bar{f}(-t)$. Fix $0<s<c$ and consider the surface $\Sigma_s$ parametrized by $(\ref{equaminrevolconf})$, where $x$ is  restrict to the interval $[-s,s]$. In this case we have $\partial \Sigma = \alpha_{-s}\cup \alpha_s$. By Observation \ref{funcfrelcurvgeo}, the geodesic curvatures of the boundary of $\Sigma_s$ with respect to the canonical metric of $\R^3$ satisfies, $k_{g_e}(-s)=-f(-s)=f(s)=k_{g_e}(s)$. Now, consider $\Sigma_{s} \subset (\R^3,e^{-\frac{\left|\vec{x}\right|^2}{4}}\left\langle, \right\rangle)$. By equation $(\ref{curvgeomudconf})$, the geodesic curvature of the boundary $\alpha_s \subset \partial \Sigma_s$ is given by $\bar{k}_{g_e}=e^{\frac{\left|\vec{x}\right|^2}{8}}\left( k_{g_e} - \nu(h) \right)$. Thus,  
\begin{eqnarray*}
\bar{k}_{g_e}(s)&=&e^{-h} \left( k_{g_e} - \nu(h) \right) \\
                &=& e^{^{\left(\frac{x(s)^2+s^2}{8}\right)}}\left( \frac{x'(s)}{x(s)\sqrt{1+x'(s)^2}} - \frac{x(s)x'(s)+s}{4\sqrt{1+x'(s)^2}} \right) \\
								&=& \bar{f}(s),
\end{eqnarray*}
and in analogous way, we have that $\bar{k}_{g_e}(-s)=-\bar{f}(-s)$. Observe that $\bar{k}_{g_e}(0)=0$, i.e, the curve $X(\{0\}\times[0,2\pi])$ is a geodesic in $\Sigma_{s} \subset (\R^3,e^{-\frac{\left|\vec{x}\right|^2}{4}}\left\langle, \right\rangle)$.






As $\bar{f}(s)=-\bar{f}(-s)$, we have that $\bar{f}$ is a odd function. We rewrite 
$$
\bar{f}(t)=a(t)b(t),
$$
where
\begin{eqnarray*}
 a(t)=\frac{e^{^{\left(\frac{x(t)^2+t^2}{8}\right)}}}{\sqrt{1+x'(t)^2}}\ \text{and}\ b(t)= \left( \frac{x'(t)}{x(t)} - \frac{x(t)x'(t)+t}{4} \right).
\end{eqnarray*}

Thus, 
\begin{eqnarray*}
\frac{d\bar{f}}{dt}(0)
											= e^{^{\left(\frac{x(0)^2}{8}\right)}}\left(  \frac{x(0)^4-8x(0)^2+8}{8x(0)^2} \right).
\end{eqnarray*}  

We observe that if $0<x(0)<\sqrt{4-2\sqrt{2}}$ then $x(0)^4-8x(0)^2+8>0$. This implies $\frac{d\bar{f}}{dt}(0)>0$ and thus, $\bar{f}$ is increasing in a neighbourhood of $0$. Therefore, there is $\delta>0$ such that $\bar{f}(\delta)>0$ and the surface $\Sigma_{\delta}$ as described above satisfies $\bar{k}_{g_e}(-\delta)=\bar{k}_{g_e}(\delta)=\bar{f}(\delta)>0$. To $r=\sqrt{x(\delta)^2+\delta^2}$ we have that $\Sigma_{\delta} \subset \B_r^3$ and moreover, $\partial \Sigma_{\delta}\subset \partial \B_r^3$. Thus, $\Sigma_{\delta} \subset \left(\B^3_r,e^{-\frac{\left|\vec{x}\right|^2}{4}}\left\langle , \right\rangle \right)$ has strictly convex boundary $\partial \Sigma_{\delta}$  
\end{proof}

\begin{lemma}\label{condinici} Let $x:(-c,c) \rightarrow \R$ be a solution to equation $(\ref{equaminconf00001})$ with initial conditions $x'(0)=0$ and $x(0)=x_0<\sqrt{4-2\sqrt{2}}$. 
Then, there is $\varepsilon>0$ such that the condition $(\ref{condgeralmingapconf00})$ is satisfied for all $t\in(-\varepsilon,\varepsilon).$ 
\end{lemma}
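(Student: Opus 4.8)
The plan is to verify the chain of inequalities in \eqref{condgeralmingapconf00} at the single point $t=0$ and then invoke continuity to extend the validity to a neighborhood $(-\varepsilon,\varepsilon)$. Since the expression in \eqref{condgeralmingapconf00} is a continuous function of $t$ (the solution $x$ is smooth and $x(t)>0$ on $(-c,c)$, so no denominators vanish), it suffices to show that at $t=0$ the middle quantity lies strictly inside the open interval $(-1,1)$; strict inequality at a point is an open condition and therefore persists on a small interval around $0$.

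First I would simplify the middle expression of \eqref{condgeralmingapconf00} at $t=0$ using the initial conditions $x(0)=x_0$ and $x'(0)=0$. Plugging these in, the factor $x(t)-x'(t)t$ becomes $x_0$, the factor $1+x'(t)^2$ becomes $1$, and $|\vec{x}|^2=x(t)^2+t^2$ becomes $x_0^2$. The numerator $\bigl(1+2u'(|\vec{x}|^2)(x(t)-x'(t)t)x(t)\bigr)(x(t)-x'(t)t)$ collapses to $\bigl(1+2u'(x_0^2)x_0^2\bigr)x_0$, and the denominator $\bigl(1+2u'(x_0^2)x_0^2\bigr)x_0(1+x'(t)^2)$ collapses to $\bigl(1+2u'(x_0^2)x_0^2\bigr)x_0$. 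Hence the quotient equals exactly $1$ at $t=0$, which lies at the boundary of the allowed interval rather than strictly inside it.

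This is precisely where the main obstacle lies: the quotient hits the \emph{endpoint} value $1$ at $t=0$, so the naive open-condition argument fails at that single point and one only gets the closed inequality $\le 1$ there. To salvage the argument I would treat the two bounds separately. The lower bound $-1\le(\cdots)$ is strict at $t=0$ (the value is $1>-1$), so it holds on a full neighborhood by continuity. For the upper bound, the relevant reduction is that, because $\bar k_1+\bar k_2=0$, the product $\lambda_1\lambda_2=\bigl(1+\tfrac{\bar k_1}{\sigma}\bar g(\vec x,\bar N)\bigr)\bigl(1+\tfrac{\bar k_2}{\sigma}\bar g(\vec x,\bar N)\bigr)=1-\bigl(\tfrac{\bar k_2}{\sigma}\bar g(\vec x,\bar N)\bigr)^2$, which is automatically $\le 1$; equivalently, the condition \eqref{gap001} reads $\frac{1}{\sigma^2}|A|^2\bar g(\vec x,\bar N)^2\le 2$, and at $t=0$ this becomes an equality that still satisfies the weak bound. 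Thus the upper estimate in \eqref{condgeralmingapconf00}, written in the symmetric form $\bigl(\tfrac{\bar k_2}{\sigma}\bar g(\vec x,\bar N)\bigr)^2\le 1$, holds with equality at $t=0$ and remains $\le 1$ throughout.

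The cleanest route, therefore, is to reformulate \eqref{condgeralmingapconf00} as the single quadratic inequality $\bigl|\tfrac{\bar k_2}{\sigma}\bar g(\vec x,\bar N)\bigr|\le 1$ and observe that its value at $t=0$ is exactly $1$. I would then examine the behavior of this quantity near $t=0$: since it equals $1$ at the origin and the minimal surface satisfies $x(0)<\sqrt{4-2\sqrt2}$ (the condition guaranteeing strict convexity of the boundary from the previous lemma), I expect the quantity to decrease away from $t=0$, staying below $1$. Concretely, one computes that the derivative of $\tfrac{\bar k_2}{\sigma}\bar g(\vec x,\bar N)$ vanishes at $t=0$ by the symmetry $x(-t)=x(t)$ established earlier, so $t=0$ is a critical point; the sign of the second derivative, controlled by the same combination $x_0^4-8x_0^2+8>0$ appearing in the strict-convexity computation, makes $t=0$ a strict local maximum of the absolute value, forcing the quantity below $1$ on a punctured neighborhood. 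This yields the desired $\varepsilon>0$ such that \eqref{condgeralmingapconf00}, and hence the gap condition \eqref{gap001}, holds for all $t\in(-\varepsilon,\varepsilon)$.
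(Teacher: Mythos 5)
Your final paragraph is essentially the paper's own proof: the paper introduces $\mathcal{F}(t)$, which is (up to sign) exactly $\tfrac{\bar k_2}{\sigma}\bar g(\vec x,\bar N)$, checks $\mathcal{F}(0)=1$ and $\mathcal{F}'(0)=0$ (your evenness observation $x(-t)=x(t)$), and computes $\mathcal{F}''(0)=\frac{-x_0^4+8x_0^2-8}{2x_0^2}<0$ using $x''(0)=\frac{2-x_0^2}{2x_0}$ from the ODE together with $x_0<\sqrt{4-2\sqrt{2}}$, so that $t=0$ is a strict local maximum with value $1$ and the bound persists on some $(-\varepsilon,\varepsilon)$ --- precisely your critical-point-plus-second-derivative argument, with the same controlling polynomial. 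The only blemish is your intermediate paragraph, where ``$\lambda_1\lambda_2\le 1$ automatically'' points in the wrong direction (what must be shown is $\lambda_1\lambda_2\ge 0$, i.e.\ $\bigl(\tfrac{\bar k_2}{\sigma}\bar g(\vec x,\bar N)\bigr)^2\le 1$) and ``remains $\le 1$ throughout'' is asserted without justification at that stage; your closing paragraph correctly discards this and supplies the actual mechanism.
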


\begin{proof}

Define the function 
\begin{equation}\label{gapequation001}
\mathcal{F}(t):=\frac{\left[4-x(t)(x(t)-x'(t)t)\right](x(t)-x'(t)t)}{x(t)(4-x(t)^2-t^2)(1+x'(t)^2)}.
\end{equation}

It is straightforward to check that $\mathcal{F}(0)=1$, $\mathcal{F}'(0)=0$ and $\mathcal{F}''(0)<0$.  
Thus, the function $\mathcal{F}$ admits a local maximum at $0\in (-c,c)$. By continuity, there is $\varepsilon>0$ such that $\mathcal{F}(t)\leq 1\,\forall\, t\in(-\varepsilon,\varepsilon)$.
We can to simplify the calculations in order to show that $\mathcal{F}''(0)<0.$ In fact, we writing $\mathcal{F}(t)=a(t)b(t)c(t),$ where
$$a(t)= \frac{4}{x(t)}-b(t), \ b(t)= x(t)-x'(t)t \ \text{and} \ c(t)= \dfrac{1}{(4-x(t)^2-t^2)(1+x'(t)^2)}$$
and use the fact that $x''(0)=\frac{2-x(0)^2}{2x(0)}$ to obtain
\begin{equation*} 
\mathcal{F}''(0)=\frac{-x(0)^4+8x(0)^2-8}{2x(0)^2}. 
\end{equation*}

As in the previous proposition, we have $-x(0)^4+8x(0)^2-8<0$ to 
$0<x(0)<\sqrt{4-2\sqrt{2}}$ and consequently $\mathcal{F}''(0)<0$,  as desired.  
\end{proof}
In consequence of two previous lemmas we have:

\begin{theorem} There is $r>0$ and a minimal surface $\Sigma \subset (\B_r^3,e^{-\frac{\left|\vec{x}\right|^2}{4}}\left\langle, \right\rangle)$ with strictly convex boundary $\partial \Sigma \subset \partial\B^3_r$ where the condition 
$$\frac{1}{\sigma^2}\left|A\right|^2 \bar{g}(\vec{x},\bar{N})^2 \leq 2 
$$
is satisfied for all point $p$ in $\Sigma$.
\end{theorem}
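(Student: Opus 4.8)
The plan is to assemble the two preceding lemmas into an existence statement, so the work is organizational rather than computational. The final theorem asserts that there exist $r>0$ and a minimal surface $\Sigma\subset(\B_r^3,e^{-|\vec{x}|^2/4}\langle,\rangle)$ with strictly convex boundary satisfying the gap condition $\frac{1}{\sigma^2}|A|^2\bar{g}(\vec{x},\bar{N})^2\leq 2$ at every point. I would fix an initial value $x_0$ with $0<x_0<\sqrt{4-2\sqrt{2}}$ and $x'(0)=0$, and let $x:(-c,c)\to\R$ be the unique solution of the minimal-surface ODE \eqref{equaminconf00001} guaranteed by the standard existence and uniqueness theorem for second-order ODEs. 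This is the single rotationally symmetric profile curve that both previous lemmas analyze; the key point is that both conclusions hold for the \emph{same} choice of $x_0$ in the interval $(0,\sqrt{4-2\sqrt{2}})$, so no compatibility issue arises.

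Next I would invoke the two lemmas to extract a common radius. From the first lemma there is $0<\delta<c$ and $r=r(\delta)=\sqrt{x(\delta)^2+\delta^2}$ such that the surface $\Sigma_\delta$ parametrized by \eqref{equaminrevolconf} with $x$ restricted to $[-\delta,\delta]$ is minimal in $(\B_r^3,e^{-|\vec{x}|^2/4}\langle,\rangle)$ and has strictly convex boundary $\partial\Sigma_\delta\subset\partial\B_r^3$, because $\bar{f}(\delta)>0$. From Lemma \ref{condinici} there is $\varepsilon>0$ such that the gap inequality \eqref{condgeralmingapconf00}, in its equivalent form $\mathcal{F}(t)\leq 1$, holds for all $t\in(-\varepsilon,\varepsilon)$. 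The natural move is to replace $\delta$ by $\delta':=\min\{\delta,\varepsilon/2\}$ (or any positive number smaller than both $\delta$ and $\varepsilon$) and set $r:=\sqrt{x(\delta')^2+(\delta')^2}$; shrinking the parameter interval preserves both properties, since strict convexity of the boundary persists as long as $\bar{f}(\delta')>0$ (which holds on the neighborhood of $0$ where $\bar{f}$ is increasing, by the argument in the first lemma), and the gap condition holds on the whole sub-interval $[-\delta',\delta']\subset(-\varepsilon,\varepsilon)$.

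It then remains to translate $\mathcal{F}(t)\leq 1$ on $[-\delta',\delta']$ into the stated gap $\frac{1}{\sigma^2}|A|^2\bar{g}(\vec{x},\bar{N})^2\leq 2$ on all of $\Sigma_{\delta'}$. This is exactly the chain of equivalences already recorded in the text: for a minimal surface one has $|\Phi|^2=|A|^2$, and as shown in the discussion preceding \eqref{condgeralmingapconf00} the gap condition is equivalent to the two eigenvalue inequalities \eqref{autvarmaior0}, which (using $\bar{k}_1+\bar{k}_2=0$) reduce to $-1\leq\frac{\bar{k}_2}{\sigma}\bar{g}(\vec{x},\bar{N})\leq 1$, and finally to the double inequality \eqref{condgeralmingapconf00} whose upper bound is precisely $\mathcal{F}(t)\leq 1$. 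Since the profile is rotationally symmetric, the quantity depends only on $t$ and not on $\theta$, so verifying it on the profile interval verifies it on the whole surface. I would therefore conclude that $\Sigma:=\Sigma_{\delta'}$ and this $r$ satisfy all the required conditions.

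The only genuine subtlety, and the step I would be most careful about, is confirming that the lower bound $-1\leq\mathcal{F}(t)$ in \eqref{condgeralmingapconf00} also holds on the chosen interval; the two lemmas together directly control the upper bound ($\mathcal{F}(t)\leq 1$, via $\mathcal{F}(0)=1$ and $\mathcal{F}''(0)<0$) and the strict convexity, but one must check that $\mathcal{F}$ does not dip below $-1$ near $t=0$. By continuity of $\mathcal{F}$ together with $\mathcal{F}(0)=1$, shrinking $\varepsilon$ further if necessary keeps $\mathcal{F}(t)$ in a small neighborhood of $1$, hence comfortably above $-1$; so the lower bound is automatic on a sufficiently small interval and costs no real work. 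With that observation in place, the theorem follows immediately by combining the two lemmas and passing to the common sub-interval.
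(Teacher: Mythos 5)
Your proposal is correct and follows essentially the same route as the paper: the paper's proof likewise just takes $\xi=\min\{\delta,\varepsilon\}$ from the two preceding lemmas, sets $r=\sqrt{x(\xi)^2+\xi^2}$, and concludes. Your extra remarks (that strict convexity persists when the parameter interval is shrunk because $\bar{f}$ is increasing near $0$, and that the lower bound $-1\leq\mathcal{F}(t)$ is automatic by continuity since $\mathcal{F}(0)=1$) are points the paper leaves implicit, and they tighten the argument without changing it.
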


\begin{proof}
Let $x:[-\xi,\xi]\rightarrow \R$ be a solution to equation  $(\ref{equaminconf00001})$ with initial conditions $x'(0)=0$ and $0<x(0)<\sqrt{4-2\sqrt{2}}$, where $\xi=\min\{\delta,\varepsilon\}$ being $\delta$ and $\varepsilon$ given by two previous lemma. Thus, we consider $\Sigma_{\xi}\subset (\B_r^3,e^{-\frac{\left|\vec{x}\right|^2}{4}}\left\langle, \right\rangle)$, where $r=\sqrt{x(\xi)^2+\xi^2}$ and the theorem follows.
\end{proof}

The above Theorem shows the existence of some surfaces under hypothesis of Theorem $(\ref{classifcmc})$. However, the next example a  little bit more interesting.

Singurd B. Angenet shows in \cite{Ang} the existence of a non-circular minimal torus $\mathbb{T}_{{\mathcal{A}}} \subset  \left(\R^3,e^{\frac{-\left|\vec{x}\right|^2}{4}}\left\langle , \right\rangle\right)$ obtained by revolution of planar closed simple curve around a fixed axis. Let us assume that such curve is in $xz-$plane and it is being rotated around the $z-$axis (see Figure 1). 
\begin{center}
\begin{tikzpicture}


\draw [->] (-5.2,0) -- (5.2,0);
\draw [->] (0,0) -- (0,8.5);
\draw (-4.5,0) .. controls (-4.5, 2.5) and (-2.5, 4.5) .. (0, 4.5) 
               .. controls (2.5,4.5) and (4.5,2.5) .. (4.5,0);

\draw[rounded corners=1.5cm] (-1.5,0) -- (-1.5,1.5) --(0,1.5);

\draw[rounded corners=1.5cm] (0,1.5) -- (1.5,1.5) --(1.5,0);

\draw[rounded corners=1.5cm] (-1.5,0) -- (-1.5,-1.5) --(0,-1.5);

\draw[rounded corners=1.5cm] (0,-1.5) -- (1.5,-1.5) --(1.5,0);

\draw[blue] (-0.74,1.3) .. controls (-0.92,0.35) and (-0.92,-0.35).. (-0.74,-1.3);

\draw[blue] (0.74,1.3) .. controls (0.55,0.35) and (0.55,-0.35) .. (0.74,-1.3);

\draw[blue] (-0.74,-1.3) .. controls (-0.25,-0.85) and (0.25,-0.85) .. (0.74,-1.3);

\draw[blue,dashed] (-0.74,1.3) .. controls (-0.5,0.4) and (-0.5,-0.35).. (-0.74,-1.3);

\draw[blue,dashed] (0.74,1.3) .. controls (0.93,0.4) and (0.93,-0.35).. (0.74,-1.3);

\draw (-1.5,0) .. controls (-1.7,-0.9) and (1.4,-0.7).. (1.5,0);

\draw[dashed] (-1.5,0) .. controls (-1.2,0.9) and (1.8,0.7).. (1.5,0);


\draw [blue] (0,0.95) .. controls (0.65,0.95) and (1.7,1.9) .. (1.85,4.6); 

\draw [blue] (0,0.95) .. controls (-0.65,0.95) and (-1.7,1.9) .. (-1.85,4.6);

\draw [blue] (1.85,4.6) .. controls (1.85,7) and (1,7.9) .. (0,7.9); 

\draw [blue] (-1.85,4.6) .. controls (-1.85,7) and (-1,7.9) .. (0,7.9);


\node (3) at (5.2,-0.2) {\small {$z$}};
\node (4) at (0.2,8.3) {\small{$x$}};
\node (5) at (-0.2,4.7) {\small{$2$}};
\node (6) at (-0.81,2.82) {\footnotesize {$\sqrt{4-2\sqrt{2}}$}};
\node (7) at (3.6,2.0) {\small{$ \mathbb{S}^{2}_{2}$}};
\node (8) at (1.65,7) {\small {$ \mathbb{T}_{\mathcal{_A}}$}};
\node (9) at (0,0.95){\small {$ - $}};

\node (10) at (-0.2,0.82){\small {$ x_1 $}};
\node (11) at (0,2.82){\small {$ - $}};
\node (12) at (1.7,-0.2){\small {$ r $}};
\node (13) at (1.7,0.8){\small {$\mathbb{B}^3_r $}};
\end{tikzpicture}

\begin{flushleft}
{\small Figure 1. The surface in the previous example is obtained by intersection of Angenent 
torus with the ball $\B^3_r$ in the space $\big(\R^3,\bar{g} \big)$, where  $\bar{g}=e^{-\frac{\left|\vec{x}\right|^2}{4}}\left\langle , \right\rangle$ .} 

\end{flushleft}

\end{center}

Angenet shows that such curve is symmetric with respect to $x-$axis intercepting it orthogonally at two points $0<x_1<x_2$. In \cite{moller2011closed}, Proposition 2.1, M\"oller got more information about the localization of $\mathbb{T}_{{\mathcal{A}}}$. Among them, one of interest us is about the value of $x_1$, more precisely, was obtained that
$$
\frac{7}{16}-\frac{3}{98}<x_1<\frac{7}{16} + \frac{3}{98}.
$$



Let $\beta(t)=(x(t),0,t)$ be a local parametrization of the curve which generates $\mathbb{T}_{{\mathcal{A}}}$ such that $x:(-c,c) \rightarrow \R$ satisfies $x(0)=x_1$ and $x'(0)=0$. We have $x(0)=x_1<\frac{7}{16} + \frac{3}{98} \approx 0,46811... < \sqrt{4-2\sqrt{2}} \approx 1,08239.$ The previous Theorem provides de following example. 

\begin{example} There is $r=r(x_1)<2$ such that $\Sigma := \mathbb{T}_{{\mathcal{A}}} \cap \B^3_r$ is a immersed minimal surface $\Sigma \subset \left(\B^3_r,e^{\frac{-\left|\vec{x}\right|^2}{4}}\left\langle , \right\rangle\right)$ where the gap condition 
\begin{equation}\label{Gapexam}
\frac{1}{\sigma^2}\left|A\right|^2 \bar{g}(\vec{x},\bar{N})^2 \leq 2
\end{equation}
is satisfied for all points $x$ in $\Sigma$ and whose boundary $\partial \Sigma \subset \B^3_r$ is strictly convex. 
\end{example}

\vspace{0.2cm}

We expect that $r$ can be choosed in such way that $r<2$ and $\Sigma=\mathbb{T}_{{\mathcal{A}}} \cap \B^3_r$ is a free boundary minimal surface in $\left(\B^3_r,e^{-\frac{\left|\vec{x}\right|^2}{4}}\left\langle , \right\rangle \right)$ where the gap condition (\ref{Gapexam}) is satisfied. And if that is the case, an interesting question is to know if for this $r$ fixed, the totally geodesic disk and the surface $\Sigma=\mathbb{T}_{{\mathcal{A}}} \cap \B^3_r$ are only surfaces in 
$\left(\B^3_r,e^{-\frac{\left|\vec{x}\right|^2}{4}}\left\langle , \right\rangle\right)$ satisfying the condition $(\ref{Gapexam})$.

\bibliography{main.bib}
\bibliographystyle{acm}

\end{document}